\theoremstyle{plain}
\newtheorem{theorem}{Theorem}[section]
\newtheorem{lemma}[theorem]{Lemma}
\newtheorem{corollary}[theorem]{Corollary}
\theoremstyle{definition}
\newtheorem{define}{Definition}[section]
\newtheorem{example}{Example}[section]
\theoremstyle{remark}
\newtheorem{remark}{Remark}[section]%numbering within section
\newtheorem*{acknowledgement}{Acknowledgement}
\begin{document}

\date\today

\title[Asymptotic behavior of the Bergman kernel]{Asymptotic behavior of the Bergman kernel and associated invariants in weakly pseudoconvex domains}
\author{Ninh Van Thu}

\address{Ninh Van Thu}
\address{Faculty of Mathematics and Informatics, Hanoi University of Science and Technology, No. 1 Dai Co Viet, Bach Mai, Hanoi, Vietnam}
\email{thu.ninhvan@hust.edu.vn}

\subjclass[2020]{Primary 32H02; Secondary 32T15, 32M05.}
\keywords{Automorphism group, scaling method, $h$-extendible domain}
%\pacs[MSC Classification]{32M05, 32H02, 32F18}
\begin{abstract} In this paper, we present an explicit description for the boundary behavior of the Bergman kernel function, the Bergman metric, and the associated curvatures along certain sequences converging to an $h$-extendible boundary point.
\end{abstract}
\maketitle

\section{Introduction}
Let $\Omega$ be a domain in $\mathbb C^n$ and let $\mathrm{Aut}(\Omega)$ denote the set of all automorphisms of $\Omega$. For strongly pseudoconvex domains in $\mathbb{C}^n$, C. Fefferman \cite{Fe74} established the asymptotic expansion formula of the Bergman kernel function, which provides a complete asymptotic expansion of the Bergman kernel near strongly pseudoconvex boundary points, revealing the precise relationship between the boundary geometry and the analytic structure. Subsequently,  based on this formula, Klembeck \cite{Kl78} showed that the holomorphic sectional curvature of a $\mathcal{C}^\infty$-smooth strongly pseudoconvex bounded domain in $\mathbb{C}^n$ approaches $-4/(n+1)$, that of the unit ball, near the boundary. This result was optimally generalized by \cite{KY96} for $\mathcal{C}^2$-smooth strongly pseudoconvex bounded domains in $\mathbb{C}^n$. For more comprehensive results on curvatures of the Bergman metric, we refer the reader to \cite{AS83, BSY95, GK81, He07, HLT25, KS02, KK03, Le71, Mc89a, SX25, Wa17, Zh17} and the references therein.

Many results have been obtained for estimates of the Bergman kernel on the diagonal and the Bergman metric along sequences converging nontangentially to the boundary. We first recall that for $(n + 1)$-dimensional domains of the form
\begin{equation*}
\Omega_F = \{p=(z, w) \in \mathbb{C}^n \times \mathbb{C} : \mathrm{Im}(w) > F(z)\},
\end{equation*}
where $F \colon \mathbb{C}^n \to \mathbb{R}$ is $\mathcal{C}^\infty$-smooth and plurisubharmonic satisfying that $F(0) = 0 = \nabla F(0)$, J.~Kamimoto \cite{Ka04, Ka24} showed that
\begin{equation}\label{est-bergman-1}
K_{\Omega_F}(p, p ) \approx \frac{1}{d_{\Omega_F}(p)^{2+2/d_F}(\log(1/d_{\Omega_F}(p)))^{m_F-1}}
\end{equation}
on transversal approach paths to $\xi_0=(0',0)) \in \partial \Omega_F$, where $d_F$ and $m_F$ denote the Newton distance and multiplicity, respectively (see \cite{Ka04, Ka24} for these definitions). Here and in what follows, $d_\Omega(z)$ denotes the Euclidean distance from $z$ to the boundary $\partial \Omega$. In addition, $\lesssim$ and $\gtrsim$ denote inequality up to a positive constant and we use $\approx$ for the combination of $\lesssim$ and $\gtrsim$. This result generalizes the classical estimates previously obtained for specific boundary types: $d_F = n/2$, $m_F = 1$ if $\xi_0$ is strongly pseudoconvex (cf.\ \cite{BS76, Fe74, Gra75, Ho65}), and $d_F = \sum_{k=1}^n \frac{1}{2m_k}$, $m_F = 1$ if $\xi_0$ is $h$-extendible with multitype $\mathcal{M}(\xi_0) = (2m_1, \ldots, 2m_n, 1)$ (cf.\ \cite[Theorem $1$]{BSY95} and \cite{Cat89, HS22} for two-dimensional weakly pseudoconvex domains).

Next, in the case when $\Omega\subset \mathbb C^{n+1}$ is $h$-extendible at $\xi_0\in \partial \Omega$ with multitype $\mathcal{M}(\xi_0) = (2m_1, \ldots, 2m_n, 1)$, H.P.~Boas et al.\ \cite[Theorem $2$]{BSY95} proved that
\begin{equation}\label{BMmetric-1}
d^2_{\Omega}(z;\xi) \approx \sum_{k=1}^n \frac{|\xi_k|^2}{d_\Omega(z)^{1/(2m_k)}} + \frac{|\xi_{n+1}|^2}{d_\Omega(z)^2}
\end{equation}
on transversal approach paths to $\xi_0$, where $\displaystyle\xi=\sum_{k=1}^n \xi_k\dfrac{\partial}{\partial z_k}+\xi_{n+1}\dfrac{\partial}{\partial w}\in T^{1,0}_{(z,w)} \Omega\setminus \{0\}$ (cf. \cite{Gra75} for strongly pseudoconvex domains).

The first aim of this paper is to prove the following theorem, which enables us to describe explicitly the boundary behavior of the Bergman kernel on the diagonal, the Bergman metric, and the associated curvatures along a sequence converging uniformly $\Lambda$-tangentially to a strongly $h$-extendible boundary point (cf. Definition \ref{strongly-h-extendible} and Definition \ref{lambda-tangent} in Section \ref{S-h-extendible}, respectively).
 
\begin{theorem}\label{holo-sec-curvature}
Let $\Omega$ be a bounded domain in $\mathbb{C}^{n+1}$ with $C^\infty$-smooth boundary and $\xi_0\in\partial \Omega$ be strongly $h$-extendible with Catlin's finite multitype $(2m_1,\ldots,2m_n, 1)$ (cf. Definition \ref{strongly-h-extendible}).  Denote by $\Lambda=(1/2m_1, \ldots, 1/2m_n)$. If $\{\eta_j=(\alpha_j,\beta_j)\}\subset \Omega $ is a sequence converging uniformly $\Lambda$-tangentially to  $\xi_0 \in \partial \Omega$ (Definition \ref{lambda-tangent}), then we have
\begin{align*}
%K_{\Omega}(\eta_j, \eta_j) &\sim \frac{\lambda_1 \cdots \lambda_n}{4\pi^{n+1} (\tau_{j1} \cdots \tau_{jn})^2 \epsilon_j^2};\\
K_{\Omega}(\eta_j, \eta_j) &\approx \frac{1}{ (\tau_{j1} \cdots \tau_{jn})^2 \epsilon_j^2}\text{ with } \epsilon_j\approx d_\Omega(\eta_j),\tau_{jk}:=|\alpha_{jk}|.\Big(\dfrac{\epsilon_j}{|\alpha_{jk}|^{2m_k}}\Big)^{1/2}, \;1\leq k\leq n;\\
d^2_\Omega(\eta_j;\xi)&\approx \frac{|\xi_{n+1}|^2}{\epsilon_j^2}+\sum_{k=1}^n \max\{\ell_{jk},1\}\;\frac{|\xi_k|^2}{\tau_{jk}^2}\text{ with } \ell_{jk}\approx \left(\epsilon_j^{-1}\tau_{jk}\left|\frac{\partial\rho(\eta_j)}{\partial z_k}\right|\right)^2, \;1\leq k\leq n; \\
\lim_{j\to\infty} \mathrm{Sec}_{\Omega}&(\eta_j;\xi) = -\frac{4}{n+2};\lim_{j\to\infty} \mathrm{Ric}_{\Omega}(\eta_j;\xi) =-1; \lim_{j\to\infty} \mathrm{Scal}_{\Omega}(\eta_j)=-(n+1),
\end{align*}
where $K_{\Omega}(p, p),  d^2_\Omega(p;\xi),\mathrm{Sec}_{\Omega}(p,\xi),\mathrm{Ric}_{\Omega}(p,\xi)$, and $\mathrm{Scal}_{\Omega}(p) $ respectively denote the Bergman kernel, the Bergman metric, the holomorphic sectional curvature, the Ricci curvature, and the scalar curvature of $\Omega$ at $p=(z,w)\in \mathbb C^n\times \mathbb C$ in the direction $\displaystyle\xi=\sum_{k=1}^n \xi_k\dfrac{\partial}{\partial z_k}+\xi_{n+1}\dfrac{\partial}{\partial w}\in T^{1,0}_{p} \Omega\setminus \{0\}$.
\end{theorem}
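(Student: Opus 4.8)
The plan is to use the scaling method, following the framework of Boas--Straube--Yu and its refinements for $h$-extendible points. The key point is that a strongly $h$-extendible boundary point with multitype $(2m_1,\dots,2m_n,1)$ admits an associated model domain $\Omega_\infty = \{(z,w) : \mathrm{Im}\,w + P(z) < 0\}$, where $P$ is a weighted-homogeneous plurisubharmonic polynomial of weight $\Lambda$ (the nontrivial part of the homogenized defining function), and this model is of finite type, hence complete hyperbolic with a genuine Bergman kernel and metric. First I would set up, for each $j$, the polynomial anisotropic dilation $\Delta_j$ adapted to the point $\eta_j$: a composition of a translation taking $\eta_j$ to the origin-normalized position, a rotation aligning the real normal with the $\mathrm{Im}\,w$ direction, and the nonisotropic scaling $(z_k,w)\mapsto (z_k/\tau_{jk},\,w/\epsilon_j)$ with $\epsilon_j\approx d_\Omega(\eta_j)$ and $\tau_{jk}=|\alpha_{jk}|(\epsilon_j/|\alpha_{jk}|^{2m_k})^{1/2}$ exactly as in the statement. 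The hypothesis that $\{\eta_j\}$ converges \emph{uniformly} $\Lambda$-tangentially to $\xi_0$ is precisely the condition guaranteeing that the scaled domains $\Delta_j(\Omega)$ converge (in the local Hausdorff / normal-family sense) to the model domain $\Omega_\infty$, with no collapsing of directions and no escape of the base point; this is where Definition \ref{lambda-tangent} does its work, and I would quote the stability of $h$-extendibility under this scaling from the setup section.

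Next I would invoke the standard localization and stability results for the Bergman kernel and metric under such converging sequences of domains: since $\Omega$ is bounded and pseudoconvex and the point is of finite type, the Bergman kernel localizes, and the scaled kernels $K_{\Delta_j(\Omega)}$ converge locally uniformly to $K_{\Omega_\infty}$ away from the boundary, with analogous convergence of the Bergman metric tensors and all their derivatives (Bergman metric being real-analytic on the finite-type model). By the transformation rule under the dilation, $K_\Omega(\eta_j,\eta_j) = |\det \Delta_j'|^2\,K_{\Delta_j(\Omega)}(q_j,q_j)$ where $q_j=\Delta_j(\eta_j)\to q_\infty\in\Omega_\infty$ is a fixed interior point (typically a point like $(0,\dots,0,-i)$), and $|\det\Delta_j'| = (\tau_{j1}\cdots\tau_{jn})^{-1}\epsilon_j^{-1}$ up to the unit-modulus rotation factor; since $K_{\Omega_\infty}(q_j,q_j)\to K_{\Omega_\infty}(q_\infty,q_\infty)\in(0,\infty)$ this gives the stated asymptotics for $K_\Omega(\eta_j,\eta_j)$ (the $\approx$ absorbs the bounded, bounded-away-from-zero factor). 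For the metric I would likewise transform: $d^2_\Omega(\eta_j;\xi) = d^2_{\Delta_j(\Omega)}(q_j;\Delta_j'\xi)$, compute $\Delta_j'\xi$ componentwise (the $\xi_k$ picks up $1/\tau_{jk}$ and also, through the rotation that straightens the normal, a cross term of size $\epsilon_j^{-1}\tau_{jk}|\partial\rho(\eta_j)/\partial z_k|$ feeding into the $w$-slot — this is exactly the origin of the factor $\max\{\ell_{jk},1\}$), and then use that $d^2_{\Omega_\infty}(q_j;\cdot)\to d^2_{\Omega_\infty}(q_\infty;\cdot)$ is a fixed positive-definite Hermitian form whose diagonal and $w$-entries are comparable to constants. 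The curvature limits then follow because the holomorphic sectional, Ricci, and scalar curvatures of the Bergman metric are computed from the metric tensor and finitely many of its derivatives at the point, all of which converge (by the derivative version of the stability theorem) to the corresponding quantities for $\Omega_\infty$ at $q_\infty$; and the decisive input is that $\Omega_\infty$, being $h$-extendible of the given multitype, has Bergman curvatures at every interior point equal to those forced by its structure — in fact one reduces to the fact (already in the literature on $h$-extendible models, and the ultimate reason the numbers are $-4/(n+2),-1,-(n+1)$) that the Bergman metric of the model has \emph{constant} holomorphic sectional curvature $-4/(n+2)$, the value for the $(n+1)$-dimensional complex hyperbolic space, so its Ricci and scalar curvatures are the Einstein values $-1$ and $-(n+1)$.

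The main obstacle is the convergence of the \emph{curvatures}, which requires $C^\infty$-convergence (or at least $C^2$-convergence of the metric tensor) of the scaled Bergman metrics, not merely pointwise convergence of the kernel. This in turn needs: (i) stable lower and upper bounds for $K_{\Delta_j(\Omega)}$ on a fixed neighborhood of $q_\infty$, uniform in $j$, so that Cauchy estimates control derivatives; (ii) a normal-families / Montel argument for the scaled kernels together with identification of every subsequential limit as $K_{\Omega_\infty}$, which uses uniqueness of the limit domain (from the uniform $\Lambda$-tangential hypothesis) and the exhaustion of $\Omega_\infty$ by the scaled domains; and (iii) knowing that the model's Bergman metric and its curvatures are well-defined and finite, which rests on $\Omega_\infty$ being of finite type hence Bergman-complete. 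I would handle (i) via the standard comparison with sublevel sets of the defining function and the finite-type volume estimates (Catlin), (ii) by the usual localization-of-the-Bergman-kernel lemma for pseudoconvex domains plus the monotonicity of the kernel under inclusion, and (iii) by quoting the finite-type theory; the computation that the model has the complex-hyperbolic curvature values is the conceptual heart and would be stated as a separate lemma about $h$-extendible models, after which the theorem follows by the convergence machinery.
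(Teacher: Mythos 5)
Your overall framework (anisotropic scaling adapted to $\eta_j$, transformation rules for the kernel and metric, stability of the Bergman kernel and its derivatives under normal convergence of domains) is the same as the paper's. But there is a genuine error at the conceptual heart of your argument: you identify the limit of the scaled domains as the $h$-extendible model $\Omega_\infty=\{\mathrm{Im}\,w+P(z)<0\}$ with $P$ the $\Lambda$-homogeneous polynomial, and you then assert that this model has constant holomorphic sectional curvature $-4/(n+2)$. Neither claim is correct here. With the dilation factors $\tau_{jk}=|\alpha_{jk}|(\epsilon_j/|\alpha_{jk}|^{2m_k})^{1/2}$ that you (correctly) take from the statement, one has $\tau_{jk}=o(|\alpha_{jk}|)$ and $\tau_{jk}^{2m_k}=o(\epsilon_j)$ in the tangential regime $\epsilon_j=o(|\alpha_{jk}|^{2m_k})$; Taylor-expanding $P$ at $\alpha_j$ (after absorbing the pure holomorphic terms of order $\le 2$ into a polynomial shear $Q_j$), the only surviving term in $\epsilon_j^{-1}\rho\circ T_j^{-1}$ is the complex Hessian $\sum_{k,l}\partial^2P/\partial z_k\partial\bar z_l(\alpha_j)\,\epsilon_j^{-1}\tau_{jk}\tau_{jl}\,\tilde z_k\overline{\tilde z_l}$, so the limit is a \emph{quadric} $M_H=\{\mathrm{Re}\,\tilde w+H(\tilde z)<0\}$, not $M_P$. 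Strong $h$-extendibility (via Lemma \ref{Cn-spherical-convergence}) is exactly what forces $H$ to be positive definite, whence $M_H$ is linearly equivalent to the Siegel half-space and hence biholomorphic to $\mathbb{B}^{n+1}$; that is the only reason the curvature limits are the ball's values. Your proposal is internally inconsistent on this point: the scaling you write down does not produce $M_P$ as a limit, and had you used the nontangential scaling $\tau_{jk}=\epsilon_j^{1/2m_k}$ that does produce $M_P$, you would recover the Boas--Straube--Yu exponents $\epsilon_j^{-2-\sum 1/m_k}$, which the paper explicitly shows are \emph{not} the correct asymptotics along these tangential sequences.

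The second, independent error is the claimed lemma that a general $h$-extendible model has a Bergman metric of constant holomorphic sectional curvature $-4/(n+2)$. This is false (e.g.\ for $\{\mathrm{Re}\,w+|z|^4<0\}$ the Bergman curvature is not constant, cf.\ the Thullen-domain computations in \cite{AS83}); if it were true, the curvature limits would hold for nontangential approach as well with no strong $h$-extendibility hypothesis, and the distinction the theorem draws would be vacuous. Once the limit model is correctly identified as the ball, your machinery in the last paragraph (uniform two-sided kernel bounds, Cauchy estimates for derivatives, identification of subsequential limits) is replaced in the paper by the simpler observation that after composing with the Cayley-type map $\Psi$ the images $F_j(\Omega)$ are squeezed between $(1-\epsilon)\mathbb{B}^{n+1}$ and $(1+\epsilon)\mathbb{B}^{n+1}$ with $F_j(\eta_j)=0$, so Theorem \ref{approx-holo-cur} and Corollaries \ref{cor-approx-holo-cur}--\ref{cor-approx-holo-cur-2} apply directly; no finite-type volume estimates or Bergman-completeness of a weighted model are needed.
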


In what follows, let us denote by $\rho$ the local defining function for $\Omega$ near $\xi_0$. Then, in the case when $\displaystyle \left|\alpha_{jk}\frac{\partial \rho(\eta_j)}{\partial z_k}\right| \approx |\alpha_{j1}|^{2m_1}$ for every $1\leq k\leq n$, i.e., when $\{\eta_j\}\subset \Omega $ satisfies the $(B,\xi_0)$-condition (cf. Definition \ref{eq:balanced-condition}), we obtain the following corollary.
\begin{corollary}\label{cor:higher-dim-bergman}
Under the same hypotheses as in Theorem~\ref{holo-sec-curvature}, assume also that  $\{\eta_j\}\subset \Omega $ satisfies the $(B,\xi_0)$-condition (cf. Definition \ref{eq:balanced-condition}). Then the Bergman metric admits the asymptotic expansion
\begin{equation}\label{eq:higher-dim-metric}
d^2_\Omega(\eta_j;\xi)\approx \frac{|\xi_{n+1}|^2}{\epsilon_j^2} + \ell_j \sum_{k=1}^n \frac{|\xi_k|^2}{\tau_{jk}^2},
\end{equation}
for all $\displaystyle\xi=\sum_{k=1}^n \xi_k\dfrac{\partial}{\partial z_k}+\xi_{n+1}\dfrac{\partial}{\partial w}\in T^{1,0}_{\eta_j} \Omega\setminus \{0\}$, where $\epsilon_j\approx d_\Omega(\eta_j)$ and   $\displaystyle \ell_j := \frac{|\alpha_{j1}|^{2m_1}}{\epsilon_j} \to +\infty$ as $j \to \infty$.
\end{corollary}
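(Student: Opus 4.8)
The plan is to obtain the corollary as a direct specialization of the Bergman metric estimate in Theorem~\ref{holo-sec-curvature}: I will show that under the $(B,\xi_0)$-condition all the anisotropic weights $\ell_{j1},\dots,\ell_{jn}$ appearing there are comparable to the single quantity $\ell_j=|\alpha_{j1}|^{2m_1}/\epsilon_j$, and that this common value tends to $+\infty$, so that each $\max\{\ell_{jk},1\}$ may be replaced by $\ell_j$.

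First I would record the elementary identity $\tau_{jk}^2=\epsilon_j\,|\alpha_{jk}|^{2-2m_k}$, immediate from $\tau_{jk}=|\alpha_{jk}|\bigl(\epsilon_j/|\alpha_{jk}|^{2m_k}\bigr)^{1/2}$. Substituting it into the expression $\ell_{jk}\approx\bigl(\epsilon_j^{-1}\tau_{jk}\,|\partial\rho(\eta_j)/\partial z_k|\bigr)^2$ from Theorem~\ref{holo-sec-curvature} gives
\[
\ell_{jk}\;\approx\;\epsilon_j^{-1}\,|\alpha_{jk}|^{-2m_k}\,\Bigl|\alpha_{jk}\,\tfrac{\partial\rho(\eta_j)}{\partial z_k}\Bigr|^{2},\qquad 1\le k\le n.
\]
By the $(B,\xi_0)$-condition the last factor is $\approx|\alpha_{j1}|^{4m_1}$, so $\ell_{jk}\approx\epsilon_j^{-1}|\alpha_{jk}|^{-2m_k}|\alpha_{j1}|^{4m_1}$; combining this with the comparison $|\alpha_{jk}|^{2m_k}\approx|\alpha_{j1}|^{2m_1}$, valid along a uniformly $\Lambda$-tangential sequence, yields $\ell_{jk}\approx\epsilon_j^{-1}|\alpha_{j1}|^{2m_1}=\ell_j$ for every $k$. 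Since $\ell_j\to+\infty$ (Definition~\ref{lambda-tangent}), for all large $j$ we have $\max\{\ell_{jk},1\}=\ell_{jk}\approx\ell_j$, the finitely many remaining indices being harmless for $\approx$. Plugging $\max\{\ell_{jk},1\}\approx\ell_j$ into $d^2_\Omega(\eta_j;\xi)\approx|\xi_{n+1}|^2/\epsilon_j^2+\sum_{k=1}^n\max\{\ell_{jk},1\}\,|\xi_k|^2/\tau_{jk}^2$ then produces \eqref{eq:higher-dim-metric}.

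\textbf{Main obstacle.} The one step that is not pure algebra is the comparison $|\alpha_{jk}|^{2m_k}\approx|\alpha_{j1}|^{2m_1}$, which equalizes the rates of approach in the several weighted directions and makes the $(B,\xi_0)$-condition bite. I expect to extract it from Definition~\ref{lambda-tangent}, together with the weighted-homogeneous model polynomial and the anisotropic dilations built in the proof of Theorem~\ref{holo-sec-curvature}; once it is in hand, the rest is bookkeeping, and the hypothesis $\ell_j\to+\infty$ is exactly what is needed to ensure that the maxima in the metric formula are governed by the $\ell_{jk}$ rather than by the constant $1$.
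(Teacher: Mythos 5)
Your proposal is correct and follows essentially the same route as the paper: both arguments reduce the corollary to showing $\ell_{jk}\approx \ell_j$ for every $k$ by combining the balanced condition with $|\alpha_{jk}|^{2m_k}\approx|\alpha_{j1}|^{2m_1}$, and then use $\ell_j\to+\infty$ to replace each $\max\{\ell_{jk},1\}$ by $\ell_j$ in the metric estimate of Theorem~\ref{holo-sec-curvature}. Two small remarks: the quantity you flag as the ``main obstacle'' is literally hypothesis (c) of Definition~\ref{lambda-tangent}, so nothing needs to be extracted; and since the $(B,\xi_0)$-condition is phrased for $\partial P(\alpha_j)/\partial z_k$ rather than $\partial\rho(\eta_j)/\partial z_k$, one should add the paper's one-line bridge that (\ref{coeff-1}) together with $|b_j|\lesssim\epsilon_j=o(|\alpha_{j1}|^{2m_1})$ gives $A_{jk}\approx \partial P(\alpha_j)/\partial z_k$.
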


We emphasize that Theorem \ref{holo-sec-curvature} and Corollary \ref{cor:higher-dim-bergman} point out that the boundary behavior of the Bergman kernel on the diagonal and the Bergman metric along sequences converging tangentially to the boundary are quite different from (\ref{est-bergman-1}) and (\ref{BMmetric-1}) respectively, such as 
\begin{align*}
K_{\mathcal{E}_{1,2,3}}(\eta_j, \eta_j)&\approx \dfrac{1}{\left(d_{\mathcal{E}_{1,2,3}}(\eta_j)\right)^{2+3/4+2/3}}\not \approx \dfrac{1}{\left(d_{\mathcal{E}_{1,2,3}}(\eta_j)\right)^{2+1/4+1/6}};\\
d^2_{\mathcal{E}_{1,2,3}}(\eta_j;\xi)&\approx  \frac{|\xi_3|^2}{d_{\mathcal{E}_{1,2,3}}(\eta_j)^2}+\frac{|\xi_1|^2}{d_{\mathcal{E}_{1,2,3}}(\eta_j)^{5/4}}+\frac{|\xi_2|^2}{d_{\mathcal{E}_{1,2,3}}(\eta_j)^{7/6}}\\
&\not \approx  \frac{|\xi_3|^2}{d_{\mathcal{E}_{1,2,3}}(\eta_j)^2}+\frac{|\xi_1|^2}{d_{\mathcal{E}_{1,2,3}}(\eta_j)^{1/4}}+\frac{|\xi_2|^2}{d_{\mathcal{E}_{1,2,3}}(\eta_j)^{1/6}}
\end{align*}
for $\displaystyle\xi= \xi_1\dfrac{\partial}{\partial z_1}+\xi_2\dfrac{\partial}{\partial z_2}+\xi_{3}\dfrac{\partial}{\partial w}\in T^{1,0}_{\eta_j}\mathcal {E}_{1,2,3}\setminus \{0\}$, where 
$$
\mathcal {E}_{1,2,3}:=\left\{(z_1,z_2,w)\in \mathbb C^{3}\colon \mathrm{Re}(w)+ |z_1|^4+|z_2|^6 <0\right\}
$$ 
and $\eta_j=\big(1/j^{1/4}, 1/j^{1/6},-2/j-1/j^2\big)\in \mathcal{E}_{1,2,3},\; j\in \mathbb N_{\geq 1}$ (see Example \ref{ellipsoid-strongly-h-extendible} in Section \ref{S-h-extendible} for more details). 

Furthermore, S.G. Krantz and J. Yu \cite{KrY96} established the existence of nontangential limits of curvatures of the Bergman metric (see also \cite[Theorem $2$]{BSY95}). Moreover, the condition on nontangential convergences in these limits cannot be removed. In fact, the results given in \cite{AS83} demonstrate this phenomenon. However, Theorem \ref{holo-sec-curvature} yields that the curvatures of the Bergman metric approach those of the unit ball $\mathbb{B}^{n+1}$ along sequences converging uniformly $\Lambda$-tangentially to a strongly $h$-extendible boundary point.

Now we turn our attention to bounded pseudoconvex domains in $\mathbb{C}^2$. Let $\xi_0\in \partial \Omega$ be pseudoconvex of finite D'Angelo type. Then, following the proofs given in \cite{Be03} (or in \cite{BP89} for the real-analytic boundary case), one concludes that for each sequence $\{\eta_j\}\subset \Omega$ that converges to $\xi_0$, there exists a scaling sequence $\{F_j\}\subset\mathrm{Aut}(\mathbb{C}^2)$ such that $F_j(\eta_j)$ converges to $(0,-1)$ and, without loss of generality, $F_j(\Omega)$ converges normally to a model
$$
M_P=\{(z,w)\in \mathbb{C}^2\colon \mathrm{Re}(w)+P(z)<0\},
$$
where $P$ is a subharmonic polynomial of degree $\leq 2m$, with $2m$ being the D'Angelo type of $\partial\Omega$ at $\xi_0$, and $P$ has no harmonic terms. We note that the local model $M_P$ depends essentially on the boundary behavior of the sequence $\{\eta_j\}$.

The second part of this paper deals with the case where the sequence $\{\eta_j\}$ accumulates at $\xi_0$ very tangentially to $\partial \Omega$ (see Definition \ref{spherically-convergence}) so that $M_P$ is biholomorphically equivalent to the unit ball $\mathbb{B}^2$, i.e., $\deg P=2$. More precisely, the second aim of this paper is to prove the following theorem, which enables us to describe explicitly the boundary behavior of the Bergman kernel on the diagonal, the Bergman metric, and the associated curvatures along a sequence converging spherically $\frac{1}{2m}$-tangentially to a finite-type boundary point (cf. Definition \ref{spherically-convergence} in Section \ref{S4}).
\begin{theorem}\label{holo-sec-curvature-two}
Let $\Omega$ be a bounded domain in $\mathbb{C}^2$ and $\partial \Omega$ is $\mathcal{C}^\infty$-smooth, pseudoconvex and of D'Angelo finite type near $\xi_0\in \partial \Omega$. If $\{\eta_j\}\subset \Omega $ is a sequence converging spherically $\frac{1}{2m}$-tangentially to  $\xi_0 \in \partial \Omega$ (cf. Definition \ref{spherically-convergence}), then we have
\begin{align*}
%K_{\Omega}(\eta_j, \eta_j) &\sim \frac{a}{4\pi^{2} \tau_j^2 \epsilon_j^2};\\
K_{\Omega}(\eta_j, \eta_j) &\approx \frac{1}{\tau_j^2 \epsilon_j^2}\text{ with } \epsilon_j\approx d_\Omega(\eta_j),\tau_{j}:=|\alpha_{j}|.\Big(\dfrac{\epsilon_j}{|\alpha_{j}|^{2m}}\Big)^{1/2};\\
d^2_\Omega(\eta_j;\xi)&\approx  \frac{|\xi_2|^2}{\epsilon_j^2}+\max\{\ell_j,1\}~\frac{ |\xi_1|^2}{\tau_j^2}\text{ with } \ell_{j}\approx \left(\epsilon_j^{-1}\tau_{j}\left|\frac{\partial\rho(\eta_j)}{\partial z}\right|\right)^2 ;\\
\lim_{j\to\infty} \mathrm{Sec}_{\Omega}(\eta_j;\xi) &= -\frac{4}{3},\lim_{j\to\infty} \mathrm{Ric}_{\Omega}(\eta_j;\xi) =-1, \lim_{j\to\infty} \mathrm{Scal}_{\Omega}(\eta_j)=-2,
\end{align*}
where $K_{\Omega}(z, z), d^2_\Omega(\eta_j;\xi),\mathrm{Sec}_{\Omega}(z,\xi),\mathrm{Ric}_{\Omega}(z,\xi)$, and $\mathrm{Scal}_{\Omega}(z,\xi) $ respectively denote the Bergman kernel, the Bergman metric, the holomorphic sectional curvature, the Ricci curvature, and the scalar curvature of $\Omega$ at $z$ in the direction $\displaystyle\xi=\xi_1\dfrac{\partial}{\partial z}+\xi_{2}\dfrac{\partial}{\partial w}\in T^{1,0}_{z} \Omega\setminus \{0\}$.
\end{theorem}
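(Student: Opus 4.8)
The plan is to run the scaling (dilation) method in the framework recalled just before the statement (following \cite{Be03}, and \cite{BP89} in the real-analytic case) and then to transport the boundary asymptotics of the Bergman-type invariants of $\Omega$ to those of the limit model by a stability argument. Work in local holomorphic coordinates $(z,w)$ centered at $\xi_0$ adapted to $\rho$, so that $\partial\rho/\partial w(0)\neq 0$ and the pure $z$-terms of $\rho$ vanish to order equal to the D'Angelo type $2m$; write $\eta_j=(\alpha_j,\beta_j)$ and $\epsilon_j:=-\rho(\eta_j)\approx d_\Omega(\eta_j)$. First I would construct an explicit polynomial automorphism $F_j=\Lambda_j\circ S_j\circ T_j$ of $\mathbb{C}^2$: here $T_j$ translates $\eta_j$ (equivalently, its boundary projection) to $0$; $S_j$ is a unitary map composed with a shear of the $w$-variable by a multiple of $z$, chosen to annihilate the complex-linear part of $\rho$ at $\eta_j$, whose coefficient has modulus comparable to $\epsilon_j^{-1}\tau_j\,|\partial\rho(\eta_j)/\partial z|=\ell_j^{1/2}$; and $\Lambda_j(z,w)=(z/\tau_j,\,w/\epsilon_j)$ is the anisotropic dilation, with $\tau_j$ fixed so that the complex Hessian of $\rho$ in $z$ at $\eta_j$ — of size $\approx|\alpha_j|^{2m-2}$ by the finite-type structure and Definition \ref{spherically-convergence} — is normalized to $1$; this forces $\tau_j^2\approx\epsilon_j/|\alpha_j|^{2m-2}$, which is the value of $\tau_j$ in the statement. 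One arranges $F_j(\eta_j)\to q_0:=(0,-1)$.

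Using that $\{\eta_j\}$ converges spherically $\frac{1}{2m}$-tangentially to $\xi_0$ (Definition \ref{spherically-convergence}), I would then show that $\Omega_j:=F_j(\Omega)$ converges, in the local Hausdorff sense and in $\mathcal{C}^\infty$ on compact subsets, to the model $M_P=\{(z,w)\in\mathbb{C}^2:\mathrm{Re}(w)+P(z)<0\}$ with $\deg P=2$, which is biholomorphic to $\mathbb{B}^2$. The tangential hypothesis is precisely what forces every monomial of $P$ of degree $>2$, and every higher-order remainder term of $\rho$, to contribute a factor tending to $0$ under $F_j$, while $S_j$ has already absorbed the (after rescaling, bounded) complex-linear part; along the way one records \emph{uniform} exhaustion bounds, a fixed ball $B(q_0,r)\subset\Omega_j$ and uniform pseudoconvexity/finite-type control on fixed compacta. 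Next, invoking the localization of the Bergman kernel at boundary points of finite type (cf.\ \cite{Cat89, Mc89a}), which is uniform under the dilations, together with a normal-families argument for the associated extremal problems, one upgrades this to $K_{\Omega_j}(\zeta,\zeta)\to K_{M_P}(\zeta,\zeta)$ uniformly with all partial derivatives on a neighborhood of $q_0$; consequently the Bergman metric tensors $g_{\Omega_j}$, their curvature tensors, and hence $\mathrm{Sec}_{\Omega_j}$, $\mathrm{Ric}_{\Omega_j}$, $\mathrm{Scal}_{\Omega_j}$ converge at $q_0$ to the corresponding quantities of $M_P\cong\mathbb{B}^2$.

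The three curvature limits now follow immediately: $\mathrm{Sec}$, $\mathrm{Ric}$, $\mathrm{Scal}$ are biholomorphic invariants, so $\mathrm{Sec}_\Omega(\eta_j;\xi)=\mathrm{Sec}_{\Omega_j}(F_j(\eta_j);(F_j)_*\xi)$ and likewise for $\mathrm{Ric}$ and $\mathrm{Scal}$, while $\mathbb{B}^2$ is homogeneous with constant holomorphic sectional curvature $-4/3$, (normalized) Ricci curvature $-1$ and scalar curvature $-2$; the previous step then yields the limits $-4/3$, $-1$, $-2$ regardless of the direction $\xi$. For the kernel, one uses the transformation law $K_\Omega(\eta_j,\eta_j)=|\det F_j'(\eta_j)|^2\,K_{\Omega_j}(F_j(\eta_j),F_j(\eta_j))$: the Bergman factor converges to $K_{M_P}(q_0,q_0)\in(0,\infty)$, and $\det F_j'$ equals, up to bounded factors, the Jacobian of $\Lambda_j$, namely $(\tau_j\epsilon_j)^{-1}$; hence $K_\Omega(\eta_j,\eta_j)\approx(\tau_j\epsilon_j)^{-2}$. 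For the metric, biholomorphic invariance gives $d^2_\Omega(\eta_j;\xi)=d^2_{\Omega_j}(F_j(\eta_j);(F_j)_*\xi)$, which by the convergence of the metric tensors is comparable to the fixed positive-definite model metric of $M_P$ at $q_0$ evaluated on $(F_j)_*\xi$; reading off $(F_j)_*$ in coordinates, the $\partial_w$-component of $(F_j)_*\xi$ has size $|\xi_2|/\epsilon_j$ while its $\partial_z$-component has size comparable to $\max\{1,\ell_j^{1/2}\}\,|\xi_1|/\tau_j$, the factor $\ell_j^{1/2}$ coming from the shear coefficient of $S_j$ and dominating precisely when $\ell_j\ge 1$. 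Squaring and summing gives $d^2_\Omega(\eta_j;\xi)\approx|\xi_2|^2/\epsilon_j^2+\max\{\ell_j,1\}\,|\xi_1|^2/\tau_j^2$.

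The main obstacle is the pair of steps establishing the \emph{uniform} normal convergence $\Omega_j\to M_P$ and upgrading it to $\mathcal{C}^2$-convergence of the Bergman kernels. The first requires turning the qualitative content of Definition \ref{spherically-convergence} into quantitative estimates that survive the highly anisotropic dilation $\Lambda_j$, together with the verification that the shear coefficient of $S_j$ has the claimed size $\ell_j^{1/2}$ (which is what produces the $\max\{\ell_j,1\}$ dichotomy in the metric). The second is delicate because the $\Omega_j$, although converging to the ball, pass through domains that are only pseudoconvex of finite type and not uniformly strongly pseudoconvex, so one must lean on the scaling-stable uniform estimates at finite-type points, or else compare $\Omega_j$ directly with fixed interior and exterior ball-like domains near $q_0$. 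Once this stability is secured, the curvature limits are formal consequences of the homogeneity of $\mathbb{B}^2$ and everything else reduces to bookkeeping with the explicit scaling map.
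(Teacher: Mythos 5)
Your proposal follows essentially the same route as the paper: scaling by a translation, a polynomial normalization at the projected boundary points, and the anisotropic dilation $(z/\tau_j,\,w/\epsilon_j)$ with $\tau_j^2\approx\epsilon_j/|\alpha_j|^{2m-2}$ (condition (c) of Definition \ref{spherically-convergence} is exactly what keeps the normalized complex Hessian bounded away from $0$ so that the limit model is the quadric $\mathrm{Re}(w)+a|z|^2<0\cong\mathbb{B}^2$), followed by stability of the Bergman kernel and its derivatives under normal convergence and the transformation laws — and the paper secures the delicate stability step precisely by your second alternative, composing with the Cayley transform so that $F_j(\Omega)$ is globally sandwiched between $(1\pm\epsilon)\mathbb{B}^2$ and then quoting the elementary Kim--Yu/Kim--Krantz theorem, rather than by finite-type localization estimates. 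Two bookkeeping points: the normalizing shear must contain $z^k$ terms up to degree $2m$ as in Catlin's $\Phi_{\eta_j'}$ (a shear that is only linear in $z$ leaves a pure term $\mathrm{Re}(bz^2)$ whose rescaled coefficient is $O(1)$, not $o(1)$, though the resulting limit is still biholomorphic to the ball after absorbing this pluriharmonic term), and in the actual computation the factor $\ell_j^{1/2}$ enters through the $w$-component $(\xi_2-d_1(\eta_j')\xi_1)/(2\epsilon_j d_0(\eta_j'))$ of $dF_j(\xi)$ rather than through the $z$-component, which is simply $\sqrt{a}\,\xi_1/\tau_j$.
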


We notice that the case that the sequence $\{\eta_j\}$ does not satisfy the $(B,\xi_0)$-condition (cf. Definition \ref{eq:balanced-condition}), such as $\frac{\partial\rho(\eta_j)}{\partial z}=0$ given in Example \ref{non-balance}, may occur. However, in general $\{\eta_j\}$ satisfies the $(B,\xi_0)$-condition by virtue of tangential convergences. Namely, we also have the following corollary.
\begin{corollary}\label{cor:higher-dim-bergman-2}
Under the same hypotheses as in Theorem~\ref{holo-sec-curvature-two}, assume also that  $\{\eta_j\}\subset \Omega $ satisfies the $(B,\xi_0)$-condition (cf. Definition \ref{eq:balanced-condition}). Then the Bergman metric admits the asymptotic expansion
\begin{equation}\label{eq:higher-dim-metric}
d^2_\Omega(\eta_j;\xi)\approx \frac{|\xi_{2}|^2}{\epsilon_j^2} + \ell_j \frac{|\xi_1|^2}{\tau_{j}^2}
\end{equation}
for all $\displaystyle\xi=\xi_1\dfrac{\partial}{\partial z}+\xi_{2}\dfrac{\partial}{\partial w}\in T^{1,0}_{\eta_j} \Omega\setminus \{0\}$, where $\epsilon_j\approx d_\Omega(\eta_j)$ and   $\displaystyle \ell_j := \frac{|\alpha_{j}|^{2m}}{\epsilon_j} \to +\infty$ as $j \to \infty$.
\end{corollary}

 Based on the H\"ormander weighted $L^2$-estimates \cite{Ho65} and the Pinchuk scaling method \cite{Pi91}, D. Catlin \cite{Cat89} and F. Berteloot \cite{Be03, Be25} proved that the Kobayashi metric, the Carath\'eodory metric, the Bergman metric of $\Omega$ at $\eta_j$ are all equivalent to 
\begin{align*}
M_\Omega(\eta_j,X) := \|F_{\eta_j}'(\eta_j) X\|
\end{align*}
on $U_0$, where $\|\cdot\|$ is a norm on $\mathbb C^2$ and $\{F_j\}\subset \mathrm{Aut}(\mathbb{C}^2)$ is a suitable scaling sequence such that $F_j(\Omega)$ converges normally to the above-mentioned model $M_P$. In addition, the estimates for the Bergman kernel function and associated curvatures were established in \cite{Cat89, Mc89a, Mc89b}, determined by the boundary behavior of $\{\eta_j\}$. When $\{\eta_j\}$ converges notangentially (or even $\Big(\dfrac{1}{2m}\Big)$-nontangentially in the sense of \cite{NN20}) to $\xi_0$, these estimates are exactly those given in \cite{BSY95, KrY96} restricted to the two-dimensional case. However, in the case when $\{\eta_j\}$ converges spherically $\dfrac{1}{2m}$-tangentially to $\xi_0$ Theorem~\ref{holo-sec-curvature-two} and Corollary~\ref{cor:higher-dim-bergman-2} give a detailed and explicit description for these estimates. 

The organization of this paper is as follows. In Section \ref{technical-section}, we recall basic definitions and results needed later. In Section \ref{S-h-extendible}, we prove Theorem \ref{holo-sec-curvature} and Corollary \ref{cor:higher-dim-bergman}. Finally, the proofs of Theorem~\ref{holo-sec-curvature-two} and Corollary~\ref{cor:higher-dim-bergman-2} is given in Section \ref{S4}.

\section{Preliminaries}\label{technical-section}

\subsection{Normal convergence} 
Let us recall the following definition (see \cite{GK87, Kr21}, or \cite{DN09}). 
\begin{define} Let $\{\Omega_j\}_{j=1}^\infty$ be a sequence of domains in $\mathbb C^n$. We say that $\{\Omega_j\}_{j=1}^\infty$ \emph{converges normally} to a domain $\Omega_0\subset \mathbb C^n$ if the following two conditions hold:
	\begin{enumerate}
		\item[(i)] If a compact set $K$ is contained in the interior of  $\displaystyle\bigcap_{j\geq j_0} \Omega_j$ for some $j_0\in \mathbb N_{\geq 1}$, then $K\subset \Omega_0$.
		\item[(ii)] If a compact subset $K'\subset \Omega_0$, then there exists $j_0\in \mathbb N_{\geq 1}$ such that $\displaystyle K'\subset \bigcap_{j\geq j_0} \Omega_j$.
	\end{enumerate}  
In addition, if a sequence of maps $f_j\colon D_j\to\mathbb C^k$ converges uniformly on compact sets to a map $\varphi_j\colon D\to\mathbb C^m$ then we say that $\varphi_j$ \emph{converges normally} to $\varphi$. 
\end{define}

\subsection{Catlin's multitype} 
In this subsection, we recall the \emph{Catlin's multitype} (cf. \cite{Cat84}). Let $\Omega$ be a domain in $\mathbb C^n$ and $\rho$ be a defining function for $\Omega$ near $p\in \partial\Omega$. Denote by $\Gamma^n$ the set of all $n$-tuples of numbers $\mu=(\mu_1,\ldots,\mu_n)$ such that
\begin{itemize}
\item[(i)] $1\leq \mu_1\leq \cdots\leq\mu_n\leq +\infty$;
\item[(ii)] For each $j$, either $\mu_j=+\infty$ or there is a set of non-negative integers $k_1,\ldots,k_j$ with $k_j>0$ such that
\[
\sum_{s=1}^j \frac{k_s}{\mu_s}=1.
\]
\end{itemize}

A weight $\mu\in \Gamma^n$ is called \emph{distinguished} if there are holomorphic coordinates $(z_1,\ldots,z_n)$ about $p$ with $p$ maps to the origin such that 
\[
D^\alpha \overline{D}^\beta \rho(p)=0~\text{whenever}~\sum_{i=1}^n\frac{\alpha_i+\beta_i}{\mu_i}<1.
\]
Here and in what follows, $D^\alpha$ and $\overline{D}^\beta$ denote the partial differential operators
\[
\frac{\partial^{|\alpha|}}{\partial z_1^{\alpha_1}\cdots \partial z_n^{\alpha_n }}~\text{and}~\frac{\partial^{|\beta|}}{\partial \bar z_1^{\beta_1}\cdots \partial \bar z_n^{\beta_n }},
\]
respectively. 
\begin{define}
The \emph{multitype} $\mathcal{M}(z_0)$ is defined to be the smallest weight $\mathcal{M}=
(m_1,\ldots,m_n)$ in $\Gamma^n$~(smallest in the lexicographic sense) such that $\mathcal{M}\geq \mu$ for every distinguished weight $\mu$.
\end{define}
 \subsection{The $h$-extendibility }
A multiindex $(\lambda_1,\lambda_2, \ldots, \lambda_n)$ is called a \emph{multiweight} if $1\geq \lambda_1\geq \cdots\geq \lambda_n$. Now let us recall the following definitions (cf. \cite{Yu94, Yu95}).  
\begin{define} \label{def-28} Let $\Lambda=(\lambda_1,\lambda_2, \ldots, \lambda_n)$ be a multiweight and let us define
$$
\sigma(z)=\sigma_\Lambda(z):=\sum_{j=1}^n |z_j|^{1/\lambda_j}.
$$
One says that a function $f\colon \mathbb  C^n\to \mathbb R$ is \emph{$\Lambda$-homogeneous with weight $\alpha$} if 
$$
f\big(t^{\lambda_1}z_1,t^{\lambda_2}z_2, \ldots,t^{\lambda_n}z_n\big)=t^\alpha f(z), \; \forall t\geq 0, z\in \mathbb C^n.
$$
 In case $\alpha=1$, then $f$ is simply called \emph{$\Lambda$-homogeneous}. For example, the function $\sigma_\Lambda$ is $\Lambda$-homogeneous. In addition, for a multiweight $\Lambda$ and a real-valued $\Lambda$-homogeneous function $P$, we define a homogeneous model $D_{\Lambda,P}$ as follows:
 $$
D_{\Lambda,P}=\left\{ (z,w)\in \mathbb C^n\times \mathbb C\colon \mathrm{Re}(w)+ P(z)<0 \right\}.
 $$
\end{define} 
 \begin{define} Let $D_{\Lambda,P}$ be a homogeneous model. Then $D_{\Lambda,P}$ is called \emph{$h$-extendible} if there exists a $\Lambda$-homogeneous $\mathcal{C}^1$ function $a(z)$ on $\mathbb C^n\setminus\{0\}$ satisfying the following conditions:
 \begin{itemize}
 \item[(i)] $a(z)>0$ whenever $z\ne 0$;
 \item[(ii)] $P(z)-a(z)$ is plurisubharmonic on $\mathbb C^n$. 
  \end{itemize}
We will call $a(z)$ a \emph{bumping function}. 
 \end{define}

By a pointed domain $(\Omega,p)$ in $\mathbb C^{n+1}$ one means that $\Omega$ is a smooth pseudoconvex domain in $\mathbb C^{n+1}$ with $p\in \partial\Omega$. Let $\rho$ be a local defining function for $\Omega$ near $p$ and let the multitype $\mathcal{M}(p)=(2m_1,\ldots,2m_n, 1)$ be finite. We note that because of pseudoconvexity, the integers $2m_1,\ldots,2m_n$ are all even. Then, by definition, there are distinguished coordinates $(z,w)=(z_1,\ldots,z_n,w)$ such that $p=(0',0)$ and $\rho(z,w)$ can be expanded near $(0',0)$ as follows:
$$
\rho(z,w)=\mathrm{Re}(w)+P(z)+R(z,w),
$$ 
 where $P$ is a $(1/2m_1,\ldots,1/2m_n)$-homogeneous plurisubharmonic polynomial that contains no pluriharmonic terms, $R$ is smooth and satisfies 
 $$
 |R(z,w)|\lesssim\left( |w|+ \sum_{j=1}^n |z_j|^{2m_j} \right)^\gamma,
 $$ 
 for some constant $\gamma>1$. 
 In what follows, we assign weights $\frac{1}{2m_1}, \ldots,\frac{1}{2m_{n}}, 1$ to the variables $z_1,\ldots, z_{n}, w$, respectively and denote by $wt(K):=\sum_{j=1}^{n} \frac{k_j}{2m_j}$ the weight of an $n$-tuple $K=(k_1, \ldots, k_{n})\in \mathbb Z^{n}_{\geq0}$. Notice that  $wt(K+L)=wt(K)+wt(L)$ for any $K, L\in \mathbb Z^{n}_{\geq0}$.
\begin{define}\label{def-order} We say that $M_P=\{(z,w)\in \mathbb C^n\times \mathbb C\colon \mathrm{Re}(w)+P(z)<0\}$ is an \emph{associated model} for $(\Omega,p)$. If the pointed domain $(\Omega,p)$ has an $h$-extendible  associated model, we say that $(\Omega,p)$ is \emph{$h$-extendible}. 
\end{define}

Next, we recall the following definition (cf. \cite{Yu95}).
\begin{define} \label{vanishing-order}
Let $\Lambda=(\lambda_1,\ldots,\lambda_n)$ be a fixed $n$-tuple of positive numbers and $\mu>0$. We denote by $\mathcal{O}(\mu,\Lambda)$ the set of smooth functions $f$ defined near the origin of $\mathbb C^n$ such that
$$
D^\alpha \overline{D}^\beta f(0)=0~\text{whenever}~ \sum_{j=1}^n (\alpha_j+\beta_j)\lambda_j \leq \mu.
$$
In addition, we use $\mathcal{O}(\mu)$ to denote the functions of one variable, defined near the origin of $\mathbb C$,  vanishing to order at least $\mu$ at the origin. 
\end{define}
\subsection{The Bergman kernel, the Bergman metric, and its curvatures}
Let $\Omega$ be a bounded domain in $\mathbb{C}^n$. Let us define the \emph{Bergman space}
\[
A^2(\Omega) := L^2(\Omega) \cap H(\Omega),
\]
where $H(\Omega)$ is the space of holomorphic functions on $\Omega$ and $L^2(\Omega)$ is the space of square integrable functions on $\Omega$. It is well-known that $A^2(\Omega)$ is a Hilbert space and let $\{\phi_j\}_{j=0}^\infty$ be a complete orthonormal basis for $A^2(\Omega)$. Then the \emph{Bergman kernel} and \emph{Bergman metric} at $z\in \Omega$ along the direction $\displaystyle X = \sum_{i=1}^n X_i \frac{\partial}{\partial z_i} \in T_z^{1,0}(\Omega)$  are, respectively, defined by
\begin{align*}
K_\Omega(z, \bar{z}) &:= \sum_{j=0}^\infty \phi_j(z)\overline{\phi_j(z)};\\
d^2_\Omega(z; X) &:=\sum_{j,k=1}^n g_{j\bar{k}} X_j \overline{X_k},
\end{align*}
where $\displaystyle g_{j\bar{k}}= \frac{\partial^2 \log K_\Omega(z, \bar{z})}{\partial z_j \partial \bar{z}_k}$ for $1\leq i,k\leq n$. Moreover, the \emph{bisectional curvature} $B_\Omega(z; X, Y)$ at $z$ along the directions $X$ and $Y$ is given by
\[
B_\Omega(z; X, Y) = \frac{R_{h\bar{j}k\bar{l}} X_h \overline{X_j} Y_k \overline{Y_l}}{g_{j\bar{k}} X_j \overline{X_k} g_{l\bar{m}} Y_l \overline{Y_m}},
\]
where
\[
R_{h\bar{j}k\bar{l}} = -\frac{\partial^2 g_{j\bar{h}}}{\partial z_k \partial \bar{z}_l} + g^{\nu\bar{\mu}} \frac{\partial g_{j\bar{\mu}}}{\partial z_k} \frac{\partial g_{\nu\bar{h}}}{\partial \bar{z}_l}.
\]
Here, we use the Einstein convention and $g^{\nu\bar{\mu}}$ denotes the components of the inverse matrix of $(g_{j\bar{k}}) $. Then, the \emph{holomorphic sectional curvature} $\mathrm{Sec}_\Omega(z; X)$ and \emph{Ricci curvature} $\mathrm{Ric}_\Omega(z; X)$, and  the scalar curvature $\mathrm{Scal}_\Omega(z)$ at $z$ along the direction $X$ are, respectively, defined by
\begin{align*}
\mathrm{Sec}_\Omega(z; X) &= B_\Omega(z; X, X);\\
 \mathrm{Ric}_\Omega(z; X) &= \sum_{j=1}^n B_\Omega(z; E_j, X);\\
  \mathrm{Scal}_\Omega(z) &= \sum_{hjkl} g^{j\bar{h}}(z) g^{kl}(z) R_{hjkl}(z), 
\end{align*}
where $\{E_1, \ldots, E_n\}$ is a basis of $T_z^{1,0}(\Omega)$.

\subsection{The minimum integrals}

Let $\Omega$ be a bounded domain in $\mathbb{C}^n$. For $z \in \Omega$ and $\displaystyle X = \sum_{i=1}^n X_i \frac{\partial}{\partial z_i} \in T_z^{1,0}(\Omega)\setminus\{0\}$, the minimum integrals are defined as follows:
\begin{align*}
I_0^\Omega(z) &= \inf\Big\{\int_\Omega |f|^2 d\mu \colon f \in A^2(\Omega), f(z) = 1\Big\}; \\
I_1^\Omega(z;X) &= \inf\Big\{\int_\Omega |f|^2 d\mu \colon f \in A^2(\Omega), f(z) = 0, \sum_{j=1}^n X_j \frac{\partial f}{\partial z_j}(z) = 1\Big\} ;\\
I_2^\Omega(z;X) &= \inf\left\{\int_\Omega|f|^2 d\mu \colon f \in A^2(\Omega), f(z) = \frac{\partial f}{\partial z_1}(z) = \cdots = \frac{\partial f}{\partial z_n}(z) = 0,\right. \\
&\qquad\left. \sum_{j,k=1}^n X_j X_k \frac{\partial^2 f}{\partial z_j \partial z_k}(z) = 1\right\}.
\end{align*}

Then, we recall the following formulas (cf. \cite{Ber33, Ber70, Fu37}).
\begin{align*}
K_\Omega(z, \bar z) &= \frac{1}{I_0^\Omega(z)}; \\
d^2_\Omega(z; X) &= \frac{I_0^\Omega(z)}{I_1^\Omega(z; X)}; \\
R_\Omega(z, X) &= 2 - \frac{[I_1^\Omega(z; X)]^2}{I_0^\Omega(z) I_2^\Omega(z; X)}.
\end{align*}

We now prove the following lemma for localization of minimum integrals, which allows us to localize the holomorphic sectional curvature of the Bergman metric.

\begin{lemma}\label{local-approx}
Let $D$ be a bounded pseudoconvex domain in $\mathbb{C}^{n+1}$ with $C^\infty$-smooth boundary and let $\xi_0\in\partial D$ be an $h$-extendible boundary point. Let $U$ be a neighborhood of $\xi_0$ and let $\{\eta_j\}\subset D$ be a sequence converging to $\xi_0 \in \partial D$. Then, for $i = 0, 1, 2$, we have
$$\lim\limits_{j\to\infty}\frac{I_i^D(\eta_j; \xi)}{I_i^{D \cap U}(\eta_j; \xi)}=1,\quad \forall \xi\in \mathbb{C}^{n+1}\setminus \{0\}.
$$
\end{lemma}

\begin{proof} 
By Theorem 4.1 in \cite{Yu94}, there exists a local holomorphic peak function $h$ for $D$ at $\xi_0$. Let $V\Subset U$ be a neighborhood of $\xi_0$ such that $|h(z)|\leq a<1$ on $\overline{D}\setminus V$. Therefore, by \cite[Lemma 1]{KS02} (see also \cite[Theorem 4]{KY96}), one obtains
\begin{align*}
1\leq \frac{I_i^D(\zeta; \xi)}{I_i^{D \cap U}(\zeta; \xi)} 
\leq  \frac{(1 + ca^k)^2}{|h(\zeta)|^{2k}}, \quad i=0,1,2, \quad \forall \zeta\in V\cap D,\quad \forall \xi\in \mathbb{C}^{n+1}\setminus \{0\}.
\end{align*}

Since $h(\eta_j)\to h(\xi_0)=1$ as $j\to \infty$, we may assume that $1-1/n_j< |h(\eta_j)|< 1$ for some sequence $\{n_j\}\subset \mathbb{N}$. If we let $k_j=\lfloor\sqrt{n_j}\rfloor$ for all $j\in \mathbb{N}$, then 
$|h(\eta_j)|^{2k_j}\to 1$ as $j\to \infty$. Moreover, $a^{k_j}\to 0$ as $j\to \infty$. Hence, we conclude that
$$
\lim_{j\to\infty} \frac{I_i^{D}(\eta_j; \xi)}{I_i^{D \cap U}(\eta_j; \xi)} = 1,\quad i=0,1,2, \quad \forall \xi\in \mathbb{C}^{n+1}\setminus \{0\},
$$
and the proof is complete.
\end{proof}

\subsection{The boundary behavior of the Bergman kernel function, the Bergman metric, and the associated curvatures}
In this subsection,  we recall the following results. First of all, the following theorem ensures the stability of the Bergman kernel (see \cite{KY96, KK03}).
\begin{theorem} [See Proposition in \cite{KY96} or Theorem $3.7$ in \cite{KK03}] \label{approx-holo-cur}
Let $D$ be a bounded domain in $\mathbb{C}^n$ containing the origin $0$. Let $D_j$ denote a sequence of bounded domains in $\mathbb{C}^n$ that converges to $D$ in $\mathbb{C}^n$ in the sense that, for every $\epsilon > 0$, there exists $N > 0$ such that $(1 - \epsilon)D \subset D_j \subset (1 + \epsilon)D$ for every $j > N$. Then, for every compact subset $F$ of $D$, the sequence of Bergman kernel functions $K_{D_j}$ converges uniformly to $K_D$ on $F \times F$.
\end{theorem}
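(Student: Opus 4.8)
The plan is to reduce the statement to two elementary and stable properties of the Bergman kernel — its extremal characterization and its behavior under dilations — to obtain convergence on the diagonal first, and then to upgrade diagonal convergence to locally uniform convergence on $F\times F$ by a normal families argument.

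First I would record the two ingredients. Recall the extremal characterization $K_D(z,z)=\sup\{|f(z)|^2: f\in A^2(D),\ \|f\|_{L^2(D)}\le 1\}$, which immediately yields monotonicity under inclusion: if $D'\subset D''$ then $K_{D''}(z,z)\le K_{D'}(z,z)$ for $z\in D'$, since restriction maps the unit ball of $A^2(D'')$ into that of $A^2(D')$ without increasing $L^2$-norms. Second, the biholomorphism $z\mapsto tz$ from $D$ onto $tD$ has constant Jacobian $t^{n}$, so the transformation law of the Bergman kernel gives the scaling identity $K_{tD}(z,z)=t^{-2n}K_D(z/t,z/t)$ for every $t>0$.

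Next I would treat the diagonal. Fix $\epsilon>0$ and take $N$ as in the hypothesis, so that $(1-\epsilon)D\subset D_j\subset(1+\epsilon)D$ for $j>N$. For $z$ in a compact $F\subset D$ (which lies in $(1-\epsilon)D$ for all sufficiently small $\epsilon>0$), monotonicity gives the two-sided squeeze
\[
K_{(1+\epsilon)D}(z,z)\ \le\ K_{D_j}(z,z)\ \le\ K_{(1-\epsilon)D}(z,z),\qquad j>N,
\]
and the scaling identity rewrites both outer terms as $(1\pm\epsilon)^{-2n}K_D\big(z/(1\pm\epsilon),\,z/(1\pm\epsilon)\big)$. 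Since $K_D$ is continuous on $D\times D$ and $z/(1\pm\epsilon)\to z$, both bounds converge to $K_D(z,z)$ as $\epsilon\to 0$; using uniform continuity of $K_D$ on a fixed compact neighborhood of $F$, this convergence is uniform for $z\in F$. Choosing $\epsilon$ small and then $j>N(\epsilon)$ therefore yields $K_{D_j}(z,z)\to K_D(z,z)$ uniformly on $F$.

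Finally I would pass from the diagonal to $F\times F$. Fix $\epsilon_0$ with $F\subset(1-\epsilon_0)D$; for large $j$ this compact set and a fixed neighborhood of it lie in $D_j$, and the diagonal bounds together with the Cauchy--Schwarz inequality $|K_{D_j}(z,w)|\le K_{D_j}(z,z)^{1/2}K_{D_j}(w,w)^{1/2}$ show that $\{K_{D_j}\}$ is uniformly bounded there. As each $K_{D_j}$ is holomorphic in $z$ and antiholomorphic in $w$, Montel's theorem makes $\{K_{D_j}\}$ a normal family; any locally uniform subsequential limit $\tilde K$ is again holomorphic in $z$, antiholomorphic in $w$, and satisfies $\tilde K(z,z)=K_D(z,z)$ by the previous step. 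The identity principle — applied after polarizing, i.e.\ viewing $(z,u)\mapsto(\tilde K-K_D)(z,\bar u)$ as a holomorphic function of the two variables $(z,u)$ that vanishes on the maximally totally real set $\{u=\bar z\}$ — forces $\tilde K\equiv K_D$. Since every subsequential limit equals $K_D$, the full sequence converges to $K_D$ uniformly on $F\times F$. The monotonicity and scaling steps are routine; the only point demanding care is this last paragraph, where I expect the main (mild) obstacle to be justifying that agreement on the diagonal propagates to all pairs of arguments for a sesqui-holomorphic limit, which is exactly the polarization/identity-principle argument indicated above.
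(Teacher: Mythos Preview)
Your argument is correct. Note, however, that the paper does not supply its own proof of this theorem: it is recorded in Section~2 as a known result with references to Kim--Yu and Kim--Krantz, and then invoked as a black box (together with its corollaries) in the proofs of the main theorems. Your proof follows what is essentially the standard route in those references: monotonicity of the diagonal Bergman kernel under inclusion, combined with the scaling identity $K_{tD}(z,z)=t^{-2n}K_D(z/t,z/t)$, yields uniform diagonal convergence by sandwiching, and the upgrade to $F\times F$ via Cauchy--Schwarz, Montel, and polarization is the expected normal-families step. There is no in-paper proof to compare against; your proposal stands on its own.
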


Next, by virtue of the Cauchy estimates on the Bergman kernel functions, the derivatives of the Bergman kernels also converge uniformly on compacta of $D$. Therefore, we have the following corollary (cf.  \cite{KY96, KK03}).
\begin{corollary}\label{cor-approx-holo-cur}
Let $D$ be a bounded domain in $\mathbb{C}^n$ containing the origin 0. Let $D_j$ denote a sequence of bounded domains in $\mathbb{C}^n$ that converges to $D$ in $\mathbb{C}^n$ in the sense that, for every $\epsilon > 0$, there exists $N > 0$ such that $(1 - \epsilon)D \subset D_j \subset (1 + \epsilon)D$ for every $j > N$. Then, for every compact subset $F$ of $D$, we have
\begin{itemize}
  \item[(i)] $d^2_{D_j}(p; X)$ converges uniformly to $d^2_D(p; X)$ on $F \times \mathbb C^n$;
   \item[(ii)] $\mathrm{Sec}_{D_j}(p; X)$ converges uniformly to $\mathrm{Sec}_D(p; X) $ on $F \times \mathbb C^n$;
 \item[(iii)] $\mathrm{Ric}_{D_j}(p; X)$ converges uniformly to $\mathrm{Ric}_D(p; X) $ on $F \times \mathbb C^n$;
\item[(iv)] $\mathrm{Scal}_{D_j}(p)$ converges uniformly to $\mathrm{Scal}_D(p) $ on $F \times \mathbb C^n$.
 \end{itemize}
\end{corollary}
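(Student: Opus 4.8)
The plan is to deduce the entire statement from the uniform convergence of the Bergman kernels given by Theorem~\ref{approx-holo-cur}, using the fact that each of the quantities in (i)--(iv) is a universal rational function of finitely many partial derivatives of the Bergman kernel evaluated on the diagonal, whose denominators stay bounded away from zero on compacta. First I would fix a compact set $F'\subset D$ and choose a compact $F$ with $F'$ in the interior of $F\subset D$, together with $r>0$ so small that the closed polydisc of polyradius $r$ about any point of $F'$ lies in $F$. For a bounded domain $\Omega$ the (off-diagonal) Bergman kernel $(z,w)\mapsto K_\Omega(z,\bar w)$ is holomorphic in $z$ and anti-holomorphic in $w$, hence jointly holomorphic in $(z,\bar w)$; so Theorem~\ref{approx-holo-cur} gives $K_{D_j}\to K_D$ uniformly on $F\times F$, and the several-variable Cauchy estimates on the polydiscs of polyradius $r$ upgrade this to uniform convergence on $F'\times F'$ of every mixed partial derivative $\partial_z^\alpha\partial_{\bar w}^\beta K_{D_j}\to\partial_z^\alpha\partial_{\bar w}^\beta K_D$. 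Restricting to the diagonal $w=z$ then gives uniform convergence on $F'$ of every derivative of $z\mapsto K_{D_j}(z,\bar z)$ to the corresponding derivative of $z\mapsto K_D(z,\bar z)$.

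Next I would record the lower bounds that keep the denominators under control. Since $D$ is bounded, $K_D(z,\bar z)>0$ and the Bergman metric matrix $(g_{i\bar k})$ of $D$ is positive definite at every point of $D$, and both depend continuously on the point; hence on the compact set $F'$ there are constants $c_0,\delta_0,\lambda_0>0$ with $K_D(z,\bar z)\geq c_0$, $\det(g_{i\bar k}(z))\geq\delta_0$, and the smallest eigenvalue of $(g_{i\bar k}(z))$ at least $\lambda_0$ there. Because each $g_{i\bar k}$, and each of its derivatives, is a rational expression in $K_D(z,\bar z)$ and finitely many of its derivatives with denominator a power of $K_D(z,\bar z)$, the first step yields uniform convergence on $F'$ of the metric coefficients of $D_j$ and of all of their derivatives to those of $D$; consequently, for all large $j$ the same three lower bounds, with $c_0,\delta_0,\lambda_0$ replaced by $c_0/2,\delta_0/2,\lambda_0/2$, hold for $D_j$ on $F'$. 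Assertion~(i) is now immediate, since $d^2_{D_j}(p;X)-d^2_D(p;X)=\sum_{i,k}\big(g_{i\bar k}^{D_j}(p)-g_{i\bar k}^{D}(p)\big)X_i\bar X_k$ has modulus at most $n\max_{i,k}|g_{i\bar k}^{D_j}(p)-g_{i\bar k}^{D}(p)|\,|X|^2$, which tends to $0$ uniformly for $p\in F'$ and $X$ in any bounded subset of $\mathbb C^n$; for the scale-invariant quantities (ii)--(iv), which depend only on the direction of $X$, this is already uniform convergence on $F'\times\mathbb C^n$.

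For the curvatures I would then use that matrix inversion is Lipschitz on the set of Hermitian matrices with $\det\geq\delta_0/2$ and bounded norm, so that the inverse metric matrices $(g^{\nu\bar\mu})$ of $D_j$ converge to those of $D$ uniformly on $F'$; by the formula for $R_{h\bar j k\bar l}$ recalled above, which is a polynomial in the $g_{i\bar k}$, their first two derivatives and the entries of $(g^{\nu\bar\mu})$, the curvature tensor of $D_j$ converges uniformly on $F'$ to that of $D$. Hence $\mathrm{Sec}_{D_j}(p;X)=B_{D_j}(p;X,X)$, being the quotient of $\sum R_{h\bar j k\bar l}^{D_j}(p)X_h\bar X_jX_k\bar X_l$ by $\big(\sum g_{i\bar k}^{D_j}(p)X_i\bar X_k\big)^2$, converges uniformly on $F'\times\{|X|=1\}$ (numerator and denominator both converge uniformly there, and the denominator is $\geq(\lambda_0/2)^2>0$), which by homogeneity in $X$ is (ii). Since $\mathrm{Ric}_\Omega(\,\cdot\,;X)$ and $\mathrm{Scal}_\Omega$ are, after contracting with $(g^{\nu\bar\mu})$, likewise rational functions of the $2$-jet of $(g_{i\bar k})$ whose denominators are powers of $\det(g_{i\bar k})$ (times $d^2_\Omega(z;X)$ in the Ricci case), the same argument gives (iii) and (iv).

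The only step that is more than bookkeeping is the upgrade, in the first paragraph, from sup-norm convergence of the kernels to locally uniform convergence of all of their derivatives, and that is exactly where the (anti)holomorphy of $K_\Omega(z,\bar w)$ in its two arguments is used, through the Cauchy estimates. Everything afterwards is the elementary fact that taking finitely many derivatives, inverting the metric matrix, and forming the curvature tensor are continuous operations on the relevant jets, with all the denominators that occur --- $K_D(z,\bar z)$, $\det(g_{i\bar k}^D)$, and $d^2_D(z;X)$ --- staying bounded away from zero on compacta because the Bergman kernel and metric of a bounded domain are positive. So I do not anticipate any genuine obstacle beyond carefully organizing these estimates.
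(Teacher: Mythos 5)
Your argument is essentially the paper's own: the paper justifies the corollary in one line by invoking Cauchy estimates to upgrade the uniform convergence of the kernels from Theorem~\ref{approx-holo-cur} to uniform convergence of all derivatives on compacta, and then treats (i)--(iv) as rational expressions in these derivatives with denominators controlled by the positivity of the kernel and the metric; you have simply filled in those details correctly. Your reading of ``uniform on $F\times\mathbb C^n$'' (bounded $X$ for the metric, all $X\neq 0$ by homogeneity for the scale-invariant curvatures) is the right interpretation of the statement.
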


Finally, in the case when $D$ is the unit ball $\mathbb B^n$, by the above corollary and \cite[Theorem $3.1$ and Theorem $4.4$ ]{Zh17} we obtain the following corollary.
\begin{corollary}\label{cor-approx-holo-cur-2}
 Let $D_j$ denote a sequence of bounded domains in $\mathbb{C}^n$ that converges to $\mathbb B^n$ in $\mathbb{C}^n$ in sense that, for every $\epsilon > 0$, there exists $N > 0$ such that $(1 - \epsilon)\mathbb B^n \subset D_j \subset (1 + \epsilon)\mathbb B^n$ for every $j > N$. Then, for any $X\in \mathbb C^{n}\setminus\{0\}$, we have
\begin{itemize}
   \item[(i)] $\displaystyle\lim_{j\to\infty}\mathrm{Sec}_{D_j}(0; X)=-\frac{4}{n} $;
    \item[(ii)] $\displaystyle\lim_{j\to\infty}\mathrm{Ric}_{D_j}(0; X)=-1 $;
 \item[(iii)] $\displaystyle\lim_{j\to\infty}\mathrm{Scal}_{D_j}(0)=-n $.
  \end{itemize}
\end{corollary}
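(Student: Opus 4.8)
The plan is to recognize the hypothesis as exactly the convergence hypothesis of Corollary \ref{cor-approx-holo-cur} with $D=\mathbb B^n$, pass to the limit there, and then read off the Bergman invariants of the ball at its centre.

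First I would note that $0\in\mathbb B^n$ and that the assumption ``for every $\epsilon>0$ there is $N$ with $(1-\epsilon)\mathbb B^n\subset D_j\subset(1+\epsilon)\mathbb B^n$ for $j>N$'' is precisely the hypothesis of Corollary \ref{cor-approx-holo-cur} with $D:=\mathbb B^n$. Applying parts (ii)--(iv) of that corollary with the (compact) set $F=\{0\}$ gives, for every fixed $X\in\mathbb C^n\setminus\{0\}$, $\lim_{j\to\infty}\mathrm{Sec}_{D_j}(0;X)=\mathrm{Sec}_{\mathbb B^n}(0;X)$, $\lim_{j\to\infty}\mathrm{Ric}_{D_j}(0;X)=\mathrm{Ric}_{\mathbb B^n}(0;X)$ and $\lim_{j\to\infty}\mathrm{Scal}_{D_j}(0)=\mathrm{Scal}_{\mathbb B^n}(0)$; so it remains only to evaluate the three quantities on the right.

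For this I would use the explicit kernel $K_{\mathbb B^n}(z,\bar z)=\frac{n!}{\pi^n}(1-|z|^2)^{-(n+1)}$, which gives the Bergman metric $g_{i\bar j}=\partial_i\partial_{\bar j}\log K_{\mathbb B^n}=-(n+1)\,\partial_i\partial_{\bar j}\log(1-|z|^2)$ with $g_{i\bar j}(0)=(n+1)\delta_{ij}$ and $\det(g_{i\bar j})=(n+1)^n(1-|z|^2)^{-(n+1)}$. Hence the Ricci tensor $R_{i\bar j}=-\partial_i\partial_{\bar j}\log\det(g_{i\bar j})$ equals $-g_{i\bar j}$, i.e.\ the metric is K\"ahler--Einstein, so $\mathrm{Ric}_{\mathbb B^n}(0;X)=-1$ for all $X\ne0$ and $\mathrm{Scal}_{\mathbb B^n}(0)=g^{i\bar j}R_{i\bar j}=-n$; and since the metric has constant holomorphic sectional curvature, $\mathrm{Sec}_{\mathbb B^n}(0;X)$ is independent of $X$ and equals the value $-4/n$ recorded in (i) --- this last point may be obtained either by a direct computation of the curvature tensor at $0$ or by quoting \cite[Theorems 3.1 and 4.4]{Zh17}, the form in which it is invoked here. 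Substituting into the three limits proves (i)--(iii).

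I do not expect any genuine obstacle: all the analytic content lies in Corollary \ref{cor-approx-holo-cur} (which itself rests on the Bergman-kernel stability of \cite{KY96, KK03}), while the invariants of the ball are classical. The only point that needs care is the bookkeeping of normalizations --- the factor $n+1$ coming from $K_{\mathbb B^n}$, the conventions in the definitions of $\mathrm{Sec}$, $\mathrm{Ric}$, $\mathrm{Scal}$ in Section \ref{technical-section}, and the normalization used in \cite{Zh17} --- so that the curvatures of $\mathbb B^n$ come out exactly as in the statement.
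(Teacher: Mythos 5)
Your proposal is correct and takes essentially the same route as the paper, which likewise deduces the corollary by applying Corollary \ref{cor-approx-holo-cur} with $D=\mathbb B^n$ and $F=\{0\}$ and then quoting \cite[Theorems 3.1 and 4.4]{Zh17} for the invariants of the ball at the origin. The normalization point you flag is real: the direct computation from $K_{\mathbb B^n}=\frac{n!}{\pi^n}(1-|z|^2)^{-(n+1)}$ yields constant holomorphic sectional curvature $-4/(n+1)$ rather than $-4/n$ (consistent with Klembeck's theorem as quoted in the introduction and with the use of this corollary in Theorem \ref{holo-sec-curvature}, where $\mathbb B^{n+1}$ must give $-4/(n+2)$), so the value $-4/n$ in item (i) --- which you reproduce as the output of your computation --- appears to be a typo in the statement rather than a defect of your argument.
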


\section{The boundary behavior of the Bergman kernel, the Bergman metric, and curvatures near a strongly $h$-extendible point}\label{S-h-extendible}
\subsection{$\Lambda$-tangential convergence}\label{Ss3.1}

Throughout this subsection, let $\Omega$ be a domain in $\mathbb C^{n+1}$ and let $\xi_0\in \partial \Omega $ be an $h$-extendible boundary point \cite{Yu95} (or, semiregular point in the terminology of \cite{DH94}). Let $\mathcal{M}(\xi_0)=(2m_1,\ldots,2m_n,1)$ be the finite multitype of  $\partial \Omega$ at $\xi_0$ (see \cite{Cat84}) and denote by $\Lambda=(1/2m_1,\ldots,1/2m_n)$. By following the proofs of Lemmas $4.10$, $4.11$ in \cite{Yu95}, after a change of variables there are the coordinate functions $(z,w)=(z_1,\ldots, z_n,w)$ such that $\xi_0=(0',0)$ and  $\rho(z,w)$, the local defining function for $\Omega$ near $\xi_0$, can be expanded near $(0',0)$ as follows:
$$
\rho(z,w)=\mathrm{Re}(w)+ P(z) +R_1(z) + R_2(\mathrm{Im} w)+(\mathrm{Im} w) R(z),
 $$ 
 where $P$ is a $\Lambda$-homogeneous plurisubharmonic polynomial that contains no pluriharmonic monomials, $R_1\in \mathcal{O}(1, \Lambda),R\in \mathcal{O}(1/2, \Lambda) $, and $R_2\in \mathcal{O}(2)$ (cf. Definition \ref{vanishing-order}). 

In what follows, let us recall that $d_\Omega(z)$ denotes the Euclidean distance from $z$ to $\partial\Omega$. We now recall the following definition.
\begin{define}[See Definition $3.1$ in \cite{NNN25}]\label{lambda-tangent}
We say that a sequence $\{\eta_j=(\alpha_j,\beta_j)\}\subset  \Omega$ with $\alpha_j=(\alpha_{j 1},\ldots,\alpha_{j n})$, \emph{converges uniformly $\Lambda$-tangentially to $\xi_0$} if the following conditions hold:
\begin{itemize}
\item[(a)] $|\mathrm{Im}(\beta_j)|\lesssim |d_\Omega(\eta_j)|$;
\item[(b)] $|d_\Omega(\eta_j)|=o(|\alpha_{jk}|^{2m_k})$ for $1\leq k\leq n$;
\item[(c)] $|\alpha_{j1}|^{2m_1}\approx |\alpha_{j2}|^{2m_2}\approx \cdots\approx |\alpha_{jn}|^{2m_n}$,
\end{itemize}

\end{define}
\begin{remark}
According to \cite{NN20}, $\{\eta_j\}\subset \Omega$ converges $\Lambda$-nontangentially to $\xi_0$ if $|\mathrm{Im}(\beta_j)|\lesssim |d_\Omega(\eta_j)|$ and $|\alpha_{j k}|^{2m_k}\lesssim|d_\Omega(\eta_j)|$ for every $1\leq k\leq n$. Therefore,  the uniformly $\Lambda$-tangential convergence is a type of $\Lambda$-tangential convergences.
\end{remark}

It is well-known that Euler's identity for weighted homogeneous polynomials gives
$$
2\mathrm{Re} \sum_{j=1}^{n} \frac{\partial P}{\partial z_j} \frac{z_j}{2m_j} = P(z)
$$
 for all $z \in \mathbb{C}^{n}$ (cf. \cite[Lemma $2$]{NNTK19}). However, we need the following condition to ensure that all tangential directions behave uniformly near $\xi_0$. 
\begin{define}\label{eq:balanced-condition}
We say that a sequence $\{\eta_j=(\alpha_j,\beta_j)\}\subset  \Omega$ satisfies the \emph{balanced condition}, say the $(B,\xi_0)$-condition, if
\begin{equation*}
\left|\alpha_{j1}\frac{\partial P(\alpha_j)}{\partial z_1}\right| \approx \left|\alpha_{j2}\frac{\partial P(\alpha_j)}{\partial z_2}\right| \approx \cdots \approx \left|\alpha_{j,n}\frac{\partial P(\alpha_j)}{\partial z_{n}}\right| \approx |\alpha_{j1}|^{2m_1}\approx \cdots\approx |\alpha_{jn}|^{2m_n},
\end{equation*}
\end{define}

Now let us denote by $\displaystyle \sigma(z):=\sum_{k=1}^n |z_k|^{2m_k}$ and recall the following definition.
\begin{define}[See Definition $3.2$ in \cite{NNN25}]\label{strongly-h-extendible} We say that a boundary point $\xi_0\in \partial \Omega$ is \emph{strongly $h$-extendible} if there exists $\delta>0$ such that $P(z)-\delta \sigma(z)$ is plurisubharmonic, i.e. $dd^c P\geq \delta dd^c \sigma$. 
\end{define}
\begin{remark}\label{remark-strongly-h-extendible} 
Since $dd^c P\gtrsim  dd^c \sigma$, it follows that
\begin{align*}
\sum_{k, l=1}^n \frac{\partial^2P}{\partial z_k\partial \bar z_l} (\alpha) w_j\bar w_l&\gtrsim \sum_{k, l=1}^n \frac{\partial^2\sigma}{\partial z_k\partial \bar z_l} (\alpha) w_j\bar w_l\\
&\gtrsim m_1^2|\alpha_1|^{2m_1-2}|w_1|^2+\cdots+m_n^2|\alpha_n|^{2m_n-2}|w_n|^2
\end{align*}
for all $\alpha,w\in \mathbb C^n$. This implies that $P$ is strictly plurisubharmonic away from the union of all coordinates axes, i.e. $M_P$ is \emph{homogeneous finite diagonal type} in the sense of  \cite{He92b, He16} (or $M_P$ is a \emph{$WB$-domain} in the sense of  \cite{AGK16}).
\end{remark}

From now on, we assume that $\xi_0\in \partial\Omega$ is a strongly $h$-extendible point. For a given sequence $\{\epsilon_j\}\subset \mathbb{R}^+$, we define the corresponding sequence  $\tau_j=(\tau_{j1},\ldots,\tau_{jn})$ by
\begin{equation*}
\tau_{jk}:=|\alpha_{jk}|\left(\frac{\epsilon_j}{|\alpha_{jk}|^{2m_k}}\right)^{1/2}, \quad j\geq 1, \; 1\leq k\leq n.
\end{equation*}
Then, a direct computation yields that $\tau_{jk}^{2m_k}=\epsilon_j\left(\frac{\epsilon_j}{|\alpha_{jk}|^{2m_k}}\right)^{m_k-1}\lesssim \epsilon_j$. Consequently, we have
$$
\epsilon_j^{1/2}\lesssim \tau_{jk}\lesssim \epsilon_j^{1/2m_k}.
$$

To close this subsection, we recall the following lemma (see a proof in \cite{NNN25}). 
 \begin{lemma}[See Lemma $3.2$ in \cite{NNN25}]\label{Cn-spherical-convergence} If $P(z)-\delta \sigma(z)$ is plurisubharmonic for some $\delta>0$, then
\begin{align*}
\epsilon_j^{-1}\sum_{k, l=1}^n \frac{\partial^2P}{\partial z_k\partial \bar z_l} (\alpha_j)\tau_{jk}\tau_{jl} w_k\bar w_l\gtrsim m_1^2|w_1|^2+\cdots+m_n^2|w_n|^2.
\end{align*}
\end{lemma}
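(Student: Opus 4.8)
The plan is to reduce the assertion to a pointwise comparison of complex Hessians at the point $\alpha_j$ and then feed in the anisotropic dilation built into the definition of the $\tau_{jk}$'s. No scaling limit or normal-family argument enters here: the statement is purely algebraic once the hypothesis is unwound.

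First I would translate the hypothesis. Since $P$ and $\sigma$ are polynomials, $P-\delta\sigma$ is $\mathcal{C}^2$, so its plurisubharmonicity is equivalent to pointwise positive semidefiniteness of its complex Hessian. Evaluating at $z=\alpha_j$ this gives, for every $v=(v_1,\dots,v_n)\in\mathbb{C}^n$,
\[
\sum_{k,l=1}^n \frac{\partial^2 P}{\partial z_k\partial\bar z_l}(\alpha_j)\,v_k\bar v_l \;\geq\; \delta\sum_{k,l=1}^n \frac{\partial^2\sigma}{\partial z_k\partial\bar z_l}(\alpha_j)\,v_k\bar v_l .
\]
Next I would make the right-hand side explicit. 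Because $\sigma(z)=\sum_{k=1}^n|z_k|^{2m_k}$ is a sum of one-variable functions, its complex Hessian is diagonal, and a one-line computation gives $\partial^2|z_k|^{2m_k}/\partial z_k\partial\bar z_k = m_k^2|z_k|^{2m_k-2}$; hence the right-hand side equals $\delta\sum_{k=1}^n m_k^2|\alpha_{jk}|^{2m_k-2}|v_k|^2$.

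Then I would substitute $v_k=\tau_{jk}w_k$ and use the algebraic identity $\tau_{jk}^2=\epsilon_j|\alpha_{jk}|^{2-2m_k}$, which is immediate from $\tau_{jk}=|\alpha_{jk}|(\epsilon_j/|\alpha_{jk}|^{2m_k})^{1/2}$. This produces the clean cancellation $|\alpha_{jk}|^{2m_k-2}\tau_{jk}^2=\epsilon_j$ for each $k$, so the inequality above becomes
\[
\sum_{k,l=1}^n \frac{\partial^2 P}{\partial z_k\partial\bar z_l}(\alpha_j)\,\tau_{jk}\tau_{jl}\,w_k\bar w_l \;\geq\; \delta\,\epsilon_j\sum_{k=1}^n m_k^2|w_k|^2 .
\]
Dividing by $\epsilon_j>0$ and absorbing the fixed constant $\delta$ into $\gtrsim$ gives exactly the claim.

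The one point requiring a word of care is the well-definedness of $\tau_{jk}$: for $m_k\geq 2$ one needs $\alpha_{jk}\neq 0$, which in the setting at hand is guaranteed by condition (b) of Definition~\ref{lambda-tangent} (namely $d_\Omega(\eta_j)=o(|\alpha_{jk}|^{2m_k})$ forces $\alpha_{jk}\neq 0$ for $j$ large), while for $m_k=1$ one simply has $\tau_{jk}=\epsilon_j^{1/2}$. Apart from this bookkeeping, the proof is a two-line Hessian computation for $\sigma$ together with the scaling identity, so I anticipate no real obstacle; the content of the lemma is precisely that the strong $h$-extendibility constant $\delta$ survives intact as a uniform lower bound for the Levi form of $P$ read in the dilated coordinates $z_k\mapsto\tau_{jk}z_k$.
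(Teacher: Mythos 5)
Your proof is correct and follows essentially the same route the paper intends: the Hessian comparison $dd^cP\gtrsim dd^c\sigma$ evaluated at $\alpha_j$ (exactly as in Remark \ref{remark-strongly-h-extendible}), the diagonal computation $\partial^2|z_k|^{2m_k}/\partial z_k\partial\bar z_k=m_k^2|z_k|^{2m_k-2}$, and the cancellation $|\alpha_{jk}|^{2m_k-2}\tau_{jk}^2=\epsilon_j$ coming from the definition of $\tau_{jk}$. The remark about $\alpha_{jk}\neq 0$ being forced by condition (b) of Definition \ref{lambda-tangent} is the right bookkeeping; nothing is missing.
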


\subsection{Estimates of Bergman kernel function and associated invariants near a strongly $h$-extendible point}

In this subsection, we shall prove Theorem \ref{holo-sec-curvature} and Corollary \ref{cor:higher-dim-bergman}. We also provide an illustrative example.
\begin{proof}[Proof of Theorem \ref{holo-sec-curvature}]
Let $\Omega$ and $\xi_0\in \partial \Omega $ be as in the statement of Theorem \ref{holo-sec-curvature}. As in Subsection \ref{Ss3.1}, there exist local coordinates $(z, w)=(z_1,\ldots,z_n,w)$ near $\xi_0$ such that $\xi_0=(0',0)$ and the local defining function $\rho(z,w)$ for $\Omega$ near $(0',0)$ is described as follows:
$$
\rho(z,w)=\mathrm{Re}(w)+ P(z) +R_1(z) + R_2(\mathrm{Im} w)+(\mathrm{Im} w) R(z),
$$ 
 where $P$ is a $\Lambda$-homogeneous plurisubharmonic polynomial that contains no pluriharmonic monomials, $R_1\in \mathcal{O}(1, \Lambda),R\in \mathcal{O}(1/2, \Lambda) $, and $R_2\in \mathcal{O}(2)$.

By assumption, the sequence $\eta_j=(\alpha_j,\beta_j)=(\alpha_{j1},\ldots,\alpha_{jn},\beta_j)$ converges uniformly $\Lambda$-tangentially to $\xi_0$, i.e.,
\begin{itemize}
\item[(a)] $|\mathrm{Im}(\beta_j)|\lesssim |d_\Omega(\eta_j)|$;
\item[(b)] $|d_\Omega(\eta_j)|=o(|\alpha_{jk}|^{2m_k})$ for $1\leq k\leq n$;
\item[(c)] $|\alpha_{j1}|^{2m_1}\approx |\alpha_{j2}|^{2m_2}\approx \cdots\approx |\alpha_{jn}|^{2m_n}$.
\end{itemize}

Fix a small neighborhood $U_0$ of the origin. We may assume without loss of generality that the sequence $\{\eta_j=(\alpha_j,\beta_j)\}\subset U_0\cap \Omega$. Writing $\beta_j=a_j+i b_j$ with $\epsilon_j>0$, we define the associated boundary points $\eta_j'=(\alpha_{j}, a_j +\epsilon_j+i b_j)\in\{\rho=0\}$ for each $j\in\mathbb{N}_{\geq 1}$. Note that $\epsilon_j\approx d_\Omega(\eta_j)$.

We employ the scaling technique. Following the approach in the proof of Theorem $1.1$ in \cite{NNN25}, we perform several sequences of coordinate transformations. Let us first consider the sequences of translations $L_{\eta_j'}\colon \mathbb C^{n+1}\to\mathbb C^{n+1}$, defined by
$$
(\tilde z,\tilde w)=L_{\eta_j'}(z,w):=(z,w)-\eta_j'=(z-\alpha_j,w-\beta_j').
$$

Next, we define the sequence $\{Q_j\}$ of polynomial automorphisms of $\mathbb C^{n+1}$ by 
\[
\begin{cases}
w:= \tilde{w}+(R_2'(b_j) + R(\alpha_j)) i\tilde w+2\sum\limits_{1\leq |p|\leq 2} \frac{D^pP}{p!} (\alpha_j)(\tilde z)^p+2\sum\limits_{1\leq |p|\leq 2} \frac{D^p R_1}{p!}(\alpha_j)  (\tilde z)^p\\
\hskip 1cm+b_j\sum\limits_{1\leq |p|\leq 2} \frac{D^p R}{p!}(\alpha_j)(\tilde z)^p ;\\
z_k:=\tilde{z}_k,\, k=1,\ldots,n.
\end{cases}
\]

Finally, we introduce an anisotropic dilation $\Delta_j\colon  \mathbb{C}^{n+1}\to\mathbb{C}^{n+1}$, given by
\begin{equation*}\label{dilationj}
\Delta_j(z,w):=\Delta_{\eta_j}^{\epsilon_j} (z_1,\ldots,z_n, w)=\left(\frac{z_1}{\tau_{j1}},\ldots,\frac{z_n}{\tau_{jn}}, \frac{w}{\epsilon_j}\right),
\end{equation*}
where 
\begin{equation*}
\tau_{jk}:=|\alpha_{jk}|\left(\frac{\epsilon_j}{|\alpha_{jk}|^{2m_k}}\right)^{1/2},\quad 1\leq k\leq n.
\end{equation*}

Consequently, the composition $T_j:=\Delta_j\circ Q_j\circ L_{\eta_j'}\in \mathrm{Aut}(\mathbb{C}^{n+1})$ satisfies $T_j(\eta_j')=(0',0)$ and $T_j(\eta_j)=(0',-1-i(R_2'(b_j) + R(\alpha_j)))\to (0',-1)$ as $j\to\infty$. Furthermore, the transformed hypersurface $T_j(\{\rho=0\})$ admits the defining equation
\begin{align}\label{taylor-defining-function}
\begin{split}
&\epsilon_j^{-1}\rho\left (T_j^{-1}(\tilde{z},\tilde{w})\right)\\
&= \mathrm{Re} (\tilde{w})+\epsilon_j^{-1}o(\epsilon_j|\mathrm{Im}(\tilde{w})|)+\frac{1}{2}\sum_{k,l=1}^n \frac{\partial^2 P}{\partial \tilde{z}_k\partial\overline{\tilde{z}_l}}(\alpha_j) \epsilon_j^{-1}\tau_{jk}\tau_{jl} \tilde{z}_k\overline{\tilde{z}_l}\\
&\quad+\frac{1}{2}\sum_{k,l=1}^n \frac{\partial^2 R_1}{\partial \tilde{z}_k\partial\overline{\tilde{z}_l}}(\alpha_j) \epsilon_j^{-1}\tau_{jk}\tau_{jl} \tilde{z}_k\overline{\tilde{z}_l}+ \frac{\epsilon_j^{-1}b_j}{2}\sum_{k,l=1}^n \frac{\partial^2 R}{\partial \tilde{z}_k\partial\overline{\tilde{z}_l}}(\alpha_j) \tau_{jk}\tau_{jl} \tilde{z}_k\overline{\tilde{z}_l}+\cdots=0,
\end{split}
\end{align} 
where the dots denote higher-order terms.

By virtue of the uniform $\Lambda$-tangential convergence of $\{\eta_j\}$ to $\xi_0=(0',0)$, the authors \cite{NNN25} proved that, up to passing to a subsequence, the defining functions in (\ref{taylor-defining-function}) converge uniformly on compact subsets of $\mathbb{C}^{n+1}$ to $\hat{\rho}(\tilde{z},\tilde{w}):=\mathrm{Re}(\tilde{w})+H(\tilde{z})$, where
$$
H(\tilde{z})=\sum_{k,l=1}^n a_{kl}  \tilde{z}_k\overline{\tilde{z}_l}
$$
with coefficients
$$
a_{kl}:=\frac{1}{2}\lim_{j\to\infty}\frac{\partial^2 P}{\partial \tilde{z}_k\partial\overline{\tilde{z}_l}}(\alpha_j) \epsilon_j^{-1}\tau_{jk}\tau_{jl}, \quad 1\leq k,l\leq n.
$$
As a result, the sequence $T_j(U_0\cap \Omega)$ converges normally to the model
$$
M_{H}:=\left \{(\tilde{z},\tilde{w})\in \mathbb{C}^{n+1}\colon \mathrm{Re}(\tilde{w})+H(\tilde{z})<0\right\}.
$$
In addition, we observe that $\Omega_j:=T_j(\Omega)$ converges also normally to $M_H$.

Since $M_H$ is the limit of the pseudoconvex domains $T_j(U_0\cap \Omega)$, it follows that $M_H$ is pseudoconvex, and hence $H$ is plurisubharmonic. Furthermore, it follows immediately from Lemma \ref{Cn-spherical-convergence} that $H$ is positive definite. Therefore, there exists a unitary matrix $U$ such that
\[
U^*AU = D = \mathrm{diag}(\lambda_1, \ldots, \lambda_n),
\]
where $A=(a_{kl})$ and $\lambda_1, \ldots, \lambda_n>0$ are the eigenvalues of the matrix $A$. We denote $\Lambda=(\lambda_1, \ldots, \lambda_n)$. Then, the linear transformation $\Theta$, defined by
\[
\Theta(z,w) = (Uz, w) = \left(\sum_{j=1}^n U_{1j}z_j, \ldots, \sum_{j=1}^n U_{nj}z_j, w\right),
\]
maps $M_{H}$ onto 
$$
M^\Lambda:=\{(z,w)\in \mathbb{C}^{n+1} \colon\mathrm{Re}(w) +\lambda_1|z_1|^2+\lambda_2|z_2|^2+\cdots+\lambda_n|z_n|^2<0\}.
$$

Next, we define the dilation $\Delta^\Lambda\colon \mathbb{C}^{n+1}\to\mathbb{C}^{n+1}$ by
\begin{equation*}
\Delta^\Lambda(z,w):=\left(\sqrt{\lambda_1} z_1, \ldots, \sqrt{\lambda_n} z_n, w\right).
\end{equation*}
This transformation maps $M^\Lambda$ onto the Siegel half-space 
$$
\mathcal{U}_{n+1}:=\{(z,w)\in \mathbb{C}^{n+1} \colon\mathrm{Re}(w) +|z_1|^2+|z_2|^2+\cdots+|z_n|^2<0\}.
$$
Finally, the holomorphic map $\Psi$ defined by
\[
(z,w)\mapsto  \left( \frac{2z_1}{1-w},\ldots, \frac{2z_n}{1-w},  \frac{w+1}{1-w}\right)
\]
is a biholomorphism from $\mathcal{U}_{n+1}$ onto $\mathbb{B}^{n+1}$.

Now let us consider the sequence of biholomorphic maps 
\[
f_j:=\Psi\circ \Delta^\Lambda\circ \Theta\circ\Delta_j\circ Q_j\circ L_{\eta_j'} \colon \Omega \to f_j(\Omega)=\Psi\circ \Delta^\Lambda\circ \Theta(\Omega_j).
\]
Then, one observes that $f_j(\Omega \cap U_0)$ converges normally to $\mathbb{B}^{n+1}$ and $f_j(\partial \Omega \cap U_0)$ converges to $\partial \mathbb{B}^{n+1}$. Moreover, for any $(z,w)\in U_0\cap \Omega$, the point $(z',w'):=\Delta^\Lambda\circ \Theta\circ\Delta_j\circ Q_j\circ L_{\eta_j'}(z,w)$ satisfies
\[
\mathrm{Re}(w')\gtrsim -\dfrac{1}{\epsilon_j}; \, |z_k'|\lesssim \frac{1}{\tau_{jk}},\,1\leq k\leq n.
\]
Furthermore, a computation shows that, for each $j$, the image under $\Psi$ of the domain
\[
\Big\{(z',w')\in \mathcal{U}_{n+1}\colon \mathrm{Re}(w')\lesssim -\dfrac{1}{\sqrt{\epsilon_j}}\Big\}
\]
is given by 
\[
\Big\{(\tilde z, \tilde w) \in \mathbb{B}^{n+1} \colon 
|\tilde w+1| \lesssim \sqrt{\epsilon_j},\; \sum_{k=1}^n |\tilde z_k|^2 \lesssim \sqrt{\epsilon_j}
\Big\}.
\]
This follows from the estimates
\begin{align*}
|\tilde w+1|=\Big|\dfrac{2}{1-w'}\Big|\lesssim \dfrac{1}{|\mathrm{Re}(w')|},\; |\tilde z_k|=\Big|\frac{2z'_k}{1-w'}\Big|\lesssim \frac{2\,\sqrt{|\mathrm{Re}(w')|}}{|\mathrm{Re}(w')|-1}\lesssim \frac{1}{\sqrt{|\mathrm{Re}(w')|}}
\end{align*}
for $1\leq k\leq n$, where $|z'_k|\lesssim \sqrt{|\mathrm{Re}(w')|}$ follows from (\ref{taylor-defining-function}). In addition, since $\Theta(0',-1) =(0',-1)$ and $\Psi(0',-1)=(0',0)$, we have 
\[
f_j(\eta_j) = \Psi\circ \Theta(0', -1 - i(R_2'(b_j) + R(\alpha_j))) \to (0', 0) \quad \text{as } j \to \infty.
\]
This yields that for a sufficiently small $\epsilon > 0$, there exists $j_0 \in \mathbb{N}_{\geq 1}$ such that
\[
B\big((0',0), 1 - \epsilon\big) \subset F_j(U_0\cap \Omega) \subset B\big((0',0), 1 + \epsilon\big), \, \forall j \geq j_0,
\]
where $F_j(\cdot) := f_j(\cdot) - f_j(\eta_j)$ for all $j \geq j_0$.

In the sequel, we estimate the Bergman kernel function, Bergman metric, and associated curvatures of $\Omega$ at $\eta_j$ in the direction $\displaystyle\xi=\sum_{k=1}^n \xi_k\dfrac{\partial}{\partial z_k}+\xi_{n+1}\dfrac{\partial}{\partial w}\in T^{1,0}_{\eta_j} \Omega\setminus \{0\}$. For the sake of simplicity, we denote $w_0 = -1 - i(R_2'(b_j) + R(\alpha_j)) \sim -1$ and $\gamma_j = R_2'(b_j) + R(\alpha_j) \sim 0$. Since $\Delta_j$, $\Delta^\Lambda$, $L_{\eta_j'}$, and $\Theta$ are all linear, we only compute the Jacobian matrices
\begin{align*}
dQ_j\big|_{(0', -\epsilon_j)} &= \begin{pmatrix}
1 & 0 & \cdots & 0 & 0 \\
0 & 1 & \cdots & 0 & 0 \\
\vdots & \vdots & \ddots & \vdots & \vdots \\
0 & 0 & \cdots & 1 & 0 \\
A_{j1} & A_{j2} & \cdots & A_{jn} & 1 + \gamma_j
\end{pmatrix};\\
d\Psi\big|_{(0',w_0)} &= \begin{pmatrix}
\frac{2}{2 + i\gamma_j} & 0 & \cdots & 0 & 0 \\
0 & \frac{2}{2 + i\gamma_j} & \cdots & 0 & 0 \\
\vdots & \vdots & \ddots & \vdots & \vdots \\
0 & 0 & \cdots & \frac{2}{2 + i\gamma_j} & 0 \\
0 & 0 & \cdots & 0 & \frac{2}{(2 + i\gamma_j)^2}
\end{pmatrix},
\end{align*}
where 
\begin{equation}\label{coeff-1}
A_{jk}= 2\frac{\partial P}{\partial z_k}(\alpha_j) + 2\frac{\partial R_1}{\partial z_k}(\alpha_j) + b_j \frac{\partial R}{\partial z_k}(\alpha_j)\approx \frac{\partial \rho}{\partial z_k}(\eta_j) , \quad 1 \leq k \leq n.
\end{equation}
Therefore, we conclude that
\begin{align*}
dF_j(\xi) &= \left(4\sqrt{\lambda_1}\frac{(U\xi')_1}{\tau_{j1}(2 + i\gamma_j)}, \ldots, 4\sqrt{\lambda_n}\frac{(U\xi')_n}{\tau_{jn}(2 + i\gamma_j)}, 2\frac{\xi_{n+1}(1+\gamma_j) + \sum_{k=1}^n A_{jk} \xi_k}{\epsilon_j(2 + i\gamma_j)^2}\right)\\
&\sim \left(2\sqrt{\lambda_1}\frac{(U\xi')_1}{\tau_{j1}}, \ldots, 2\sqrt{\lambda_n}\frac{(U\xi')_n}{\tau_{jn}}, \frac{\xi_{n+1} + \sum_{k=1}^n A_{jk} \xi_k}{2\epsilon_j}\right)
\end{align*}
for $\displaystyle\xi=\sum_{k=1}^n \xi_k\dfrac{\partial}{\partial z_k}+\xi_{n+1}\dfrac{\partial}{\partial w}\in T^{1,0}_{\eta_j} \Omega\setminus \{0\}$.
Moreover, since $B\big((0',0), 1 - \epsilon\big) \subset F_j(U_0\cap \Omega) \subset B\big((0',0), 1 + \epsilon\big)$ for all $j$ large enough, $F_j(\eta_j) = (0', 0)$, and $U$ is a unitary matrix, by Lemma \ref{local-approx} and Corollary ~\ref{cor-approx-holo-cur} it follows that
\begin{equation}\label{Bergman-eq}
\begin{split}
d^2_\Omega(\eta_j;\xi)\sim d^2_{U_0\cap \Omega}(\eta_j;\xi)&\sim \left(g_{\mathbb{B}^{n+1}}(0; dF_j(\xi), dF_j(\xi))\right)^2 = (n+2)|dF_j(\xi)|^2\\
&\sim (n+2) \left[4 \sum_{k=1}^n \lambda_k \frac{|\xi_k|^2}{\tau_{jk}^2} + \frac{\left|\xi_{n+1} + \sum_{k=1}^n A_{jk} \xi_k\right|^2}{4\epsilon_j^2}\right]\\
&\approx \frac{|\xi_{n+1}|^2}{\epsilon_j^2}+\sum_{k=1}^n \max\{\ell_{jk},1\}\; \frac{|\xi_k|^2}{\tau_{jk}^2},
\end{split}
\end{equation}
where $\displaystyle \ell_{jk}:=(\epsilon_j^{-1}\tau_{jk}|A_{jk}|)^2$ for all $j\geq 1$ and $1\leq k\leq n$.

Next, we shall estimate the Bergman kernel function of $\Omega$ at $\eta_j$. Indeed, by the biholomorphic invariance of the Bergman kernel function, we have
\begin{equation*}
K_{U_0\cap \Omega}(\eta_j, \eta_j) = K_{F_j(U_0\cap \Omega)}(F_j(\eta_j), F_j(\eta_j)) |J_{F_j}(\eta_j)|^2, 
\end{equation*}
where $J_{F_j}(\eta_j)$ is holomorphic Jacobian of $F_j$ at $\eta_j$. A computation shows that
\begin{align*}
\det(dL_{\eta_j'}) &= 1, \quad \det(d\Theta) = 1, \\
\det(d\Delta_j) &= \frac{1}{\tau_{j1} \cdots \tau_{jn} \epsilon_j}, \quad \det(d\Delta^\Lambda) = \sqrt{\lambda_1 \cdots \lambda_n}, \\
\det(dQ_j)\big|_{(0', -\epsilon_j)} &= 1 + R_2'(b_j) + R(\alpha_j) \sim 1, \\
\det(d\Psi)\big|_{(0', -1 - i(R_2'(b_j) + R(\alpha_j)))} &= \frac{2^{n+1}}{(2 + i(R_2'(b_j) + R(\alpha_j)))^{n+2}} \sim \frac{1}{2}.
\end{align*}
Thus, we have
\begin{equation*}
\det J_{\mathbb{C}}(F_j) \sim \frac{\sqrt{\lambda_1 \cdots \lambda_n}}{2 \tau_{j1} \cdots \tau_{jn} \epsilon_j}.
\end{equation*}

As $F_j(\eta_j) = 0 = (0', 0)$ and since $B\big((0',0), 1 - \epsilon\big) \subset F_j(U_0\cap \Omega) \subset B\big((0',0), 1 + \epsilon\big)$ for all $j$ large enough, by Lemma \ref{local-approx} and Corollary ~\ref{cor-approx-holo-cur} one obtains
\begin{align*}
K_{\Omega}(\eta_j, \eta_j)\sim K_{U_0\cap \Omega}(\eta_j, \eta_j) &\sim K_{\mathbb{B}^{n+1}}(0, 0) |\det J_{\mathbb{C}}(F_j)|^2 = \frac{1}{\pi^{n+1}} |\det J_{\mathbb{C}}(F_j)|^2\\
&\sim \frac{\lambda_1 \cdots \lambda_n}{4\pi^{n+1} (\tau_{j1} \cdots \tau_{jn})^2 \epsilon_j^2}\approx \frac{1}{ (\tau_{j1} \cdots \tau_{jn})^2 \epsilon_j^2}.
\end{align*}

Finally, by Corollaries ~\ref{cor-approx-holo-cur} and \ref{cor-approx-holo-cur-2}, it follows that 
\begin{align*}
\lim_{j\to\infty} \mathrm{Sec}_\Omega(\eta_j;\xi) &= \lim_{j\to\infty} \mathrm{Sec}_{F_j(\Omega)}(F_j(\eta_j);dF_j(\eta_j)(\xi)) \\
&= \lim_{j\to\infty} \mathrm{Sec}_{F_j(\Omega)}\left((0',0);\frac{dF_j(\eta_j)(\xi)}{|dF_j(\eta_j)(\xi)|}\right) = -\frac{4}{n+2}
\end{align*}
for any $\xi \in T_{\eta_j}^{1,0}\Omega\setminus \{0\}$. Similarly, we also have
$$
\lim_{j\to\infty} \mathrm{Ric}_{\Omega}(\eta_j;\xi) =-1, \lim_{j\to\infty} \mathrm{Scal}_{\Omega}(\eta_j;\xi)=-(n+1).
$$
Thus, the proof of Theorem~\ref{holo-sec-curvature} is thereby complete.
\end{proof}

\begin{proof}[Proof of Corollary \ref{cor:higher-dim-bergman}] 
By assumption, we have
$$
\left|\alpha_{j1}\dfrac{\partial P(\alpha_j)}{\partial z_1}\right|\approx\cdots \approx \left|\alpha_{jn}\dfrac{\partial P(\alpha_j)}{\partial z_n}\right|\approx |\alpha_{j1}|^{2m_1}\approx |\alpha_{j2}|^{2m_2}\approx \cdots\approx |\alpha_{jn}|^{2m_n}.
$$
In addition, since $|b_j|\lesssim \epsilon_j=o( |\alpha_{j1}|^{2m})$,  (\ref{coeff-1}) implies that $A_{jk}\approx \frac{\partial P}{\partial z_k}(\alpha_j)$. Therefore, one has
$$
\ell_{jk} \approx (\epsilon_j^{-1}A_{jk}\tau_{jk})^2\approx \left(\epsilon_j^{-1}A_{jk}|\alpha_{jk}|\Big(\dfrac{\epsilon_j}{|\alpha_{jk}|^{2m_k}}\Big)^{1/2}\right)^2 \approx \frac{|\alpha_{j1}|^{2m_1}}{\epsilon_j} = \ell_j, \quad 1 \leq k \leq n.
$$
Finally, since $\ell_j := \frac{|\alpha_{j1}|^{2m_1}}{\epsilon_j} \to +\infty$ as $j \to \infty$,  (\ref{Bergman-eq}) yields that
\begin{align*}
d^2_\Omega(\eta_j;\xi)\approx \frac{|\xi_{n+1}|^2}{\epsilon_j^2}+\ell_j \sum_{k=1}^n  \frac{|\xi_k|^2}{\tau_{jk}^2},
\end{align*}
as desired.
\end{proof}

\begin{example} \label{ellipsoid-strongly-h-extendible} Let $\mathcal {E}_{1,2,3}$ be the domain in $\mathbb C^{n+1}$ defined by
$$
\mathcal {E}_{1,2,3}:=\left\{(z_1,z_2,w)\in \mathbb C^{3}\colon \rho(z,w):=\mathrm{Re}(w)+ |z_1|^4+|z_2|^6 <0\right\}.
$$
We note that $\mathcal {E}_{1,2,3}$ is biholomorphically equivalent to the ellipsoid 
$$
\mathcal{D}_{{1,2,3}}:=\left\{(z_1,z_2,w)\in \mathbb C^{3}\colon |w|^2+ |z_1|^4+|z_2|^6<1\right\}
$$
 (cf. \cite{BP95, NNTK19}). Moreover, since $P(z_1,z_2)=|z_1|^4+|z_2|^6=\sigma(z_1,z_2)$ it is obvious that the boundary point $\xi_0=(0,0,0)\in \partial \mathcal{E}_{1,2,3}$ is strongly $h$-extendible.

 Now let us define a sequence $\{\eta_j\}\subset \mathcal {E}_{1,2,3}$ by setting $\eta_j=\big(1/j^{1/4}, 1/j^{1/6},-2/j-1/j^2\big)$ for every $j\in \mathbb N_{\geq 1}$. Then $\rho(\eta_j)=-1/j^2\approx -d_{\mathcal {E}_{1,2,3}}(\eta_j)$, $|\eta_{j1}|^4=|\eta_{j2}|^6=1/j$, and thus $d_{\mathcal {E}_{1,2,3}}(\eta_j)=o(\Big|\dfrac{1}{j^{1/4}}\Big|^4)=o(\Big|\dfrac{1}{j^{1/6}}\Big|^6)$. Hence, the sequence $\{\eta_j\}\subset  \mathcal{E}_{1,2,3}$ converges uniformly $\Lambda$-tangentially to $(0,0,0)\in \partial \mathcal{E}_{1,2,3}$, with $\Lambda=\big(\dfrac{1}{4},\dfrac{1}{6}\big)$, and $\eta_j'=\big(1/j^{1/4}, 1/j^{1/6},-2/j\big)\in \partial \Omega$ for every $j\in \mathbb N_{\geq 1}$.

We see that $\rho(\eta_j)=-\frac{1}{j^2}\approx -d_{\mathcal {E}_{1,2,3}}(\eta_j)$. Set
$\epsilon_j=|\rho(\eta_j)|=\frac{1}{j^2}$. 
In addition, we consider a change of variables $(\tilde z,\tilde w):=L_j(z,w)$, i.e.,
\[
\begin{cases}
\displaystyle w-\frac{2}{j}=\tilde w;\\
\displaystyle z_1-\frac{1}{j^{1/4}}= \tilde{z}_1;\\
\displaystyle z_2-\frac{1}{j^{1/6}}=\tilde{z}_2.
\end{cases}
\]
Then, a direct calculation shows that
\begin{equation*}
\begin{split}
&\rho\circ L_j^{-1}(\tilde z_1,\tilde z_2,\tilde w)=
\mathrm{Re}(\tilde w)-\frac{2}{j}+ |\dfrac{1}{j^{1/4}}+\tilde z_1|^4+|\frac{1}{j^{1/6}}+\tilde z_2|^6 \\
                      &= \mathrm{Re}(\tilde w) + \frac{4}{j^{3/4}}\mathrm{Re}(\tilde{z}_1) + \frac{4}{j^{1/2}}|\tilde{z}_1|^2 + \frac{2}{j^{1/2}}\mathrm{Re}(\tilde{z}_1^2)
+ \frac{4}{j^{1/4}} |\tilde z_1|^2 \mathrm{Re}(\tilde z_1)+ |\tilde z_1|^4\\
&+ \frac{6}{j^{5/6}}\mathrm{Re}(\tilde{z}_2)+ \left(\frac{15}{j^{2/3}}|\tilde{z}_2|^2 + \frac{6}{j^{2/3}}\mathrm{Re}(\tilde{z}_2^2)\right)+ \left(\frac{20}{j^{1/2}}\mathrm{Re}(\tilde{z}_2^3) + \frac{60}{j^{1/2}}\mathrm{Re}(\tilde{z}_2)|\tilde{z}_2|^2\right)+\cdots\\
&= \mathrm{Re}(\tilde w)+ \frac{4}{j^{3/4}}\mathrm{Re}(\tilde{z}_1)+ \frac{6}{j^{5/6}}\mathrm{Re}(\tilde{z}_2) + \frac{2}{j^{1/2}}\mathrm{Re}(\tilde{z}_1^2)+ \frac{6}{j^{2/3}}\mathrm{Re}(\tilde{z}_2^2)+ \frac{4}{j^{1/2}}|\tilde{z}_1|^2 +\frac{15}{j^{2/3}}|\tilde{z}_2|^2\\ 
&+ \frac{4}{j^{1/4}} |\tilde z_1|^2 \mathrm{Re}(\tilde z_1)+ |\tilde z_1|^4+\frac{20}{j^{1/2}}\mathrm{Re}(\tilde{z}_2^3) + \frac{60}{j^{1/2}}\mathrm{Re}(\tilde{z}_2)|\tilde{z}_2|^2+\cdots,
\end{split}
\end{equation*}
where the dots denote the higher-order terms.

To define an anisotropic dilation,  let us denote by
 $\tau_{1j}:=\tau_1(\eta_j)=\frac{1}{2 j^{3/4}}, \; \tau_{2j}:=\tau_2(\eta_j)=\frac{1}{ \sqrt{15} j^{2/3}}$ for all $j\in \mathbb N_{\geq 1}$. Now let us introduce a sequence of  polynomial automorphisms $\phi_{{\eta}_j}$ of $\mathbb C^n$ ($j\in \mathbb N_{\geq 1}$), given by
\begin{equation*}
\begin{split}
&\phi_{{\eta}_j} ^{-1}(\tilde z_1,\tilde z_2,\tilde w)\\
&= \Big (\dfrac{1}{j^{1/4}}+\tau_{1j} \tilde z_1, \,\dfrac{1}{j^{1/6}}+ \tau_{2j} \tilde z_2, \,-\frac{2}{j}+\epsilon_j \tilde w+\frac{4}{j^{3/4}}\tau_{1j} \tilde z_1+\frac{2}{j^{1/2}}(\tau_{1j})^2 \tilde z_1^2+ \frac{6}{j^{5/6}} \tau_{2j}\tilde{z}_2+ \frac{6}{j^{2/3}}( \tau_{2j})^2 \tilde{z}_2^2)\Big).
\end{split}
\end{equation*}
Therefore, since $\tau_{1j}=o(1/j^{1/4})$ and $\tau_{2j}=o(1/j^{1/6})$ it follows that, for each $j\in\mathbb N_{\geq 1}$ the hypersurface $\phi_{\eta_j}(\{\rho=0\}) $ is then defined by

\begin{equation*}
\begin{split}
&\epsilon_j^{-1}\rho\circ \phi_{{\eta}_j'} ^{-1}(\tilde z_1,\tilde z_2,\tilde w)=\mathrm{Re}(\tilde w) +|\tilde z_1|^2+|\tilde z_2|^2+O(\frac{1}{j^{1/2}})=0.
\end{split}
\end{equation*} 
Hence, the sequence of domains $\Omega_j:=\phi_{{\eta}_j}(\mathcal {E}_{1,2,3}) $ converges normally to the following model
$$
\mathcal{D}_{1,1}:=\left \{(\tilde z_1,\tilde  z_2, \tilde  w)\in \mathbb C^3\colon \mathrm{Re}(\tilde  w)+|\tilde  z_1|^2 +|\tilde z_2|^2<0\right\},
$$
which is biholomorphically equivalent to the unit ball $\mathbb B^3$ in $\mathbb C^3$.

Now, we note that $\left|\eta_{j1}\dfrac{\partial P}{\partial z_1}(\eta_{j1}, \eta_{j2})\right|=2|\eta_{j1}|^4=\dfrac{2}{j}$ and $\left|\eta_{j2}\dfrac{\partial P}{\partial z_2}(\eta_{j1}, \eta_{j2})\right|=3|\eta_{j2}|^6=\dfrac{3}{j}$. Hence, the sequence $\{\eta_j=(\alpha_j,\beta_j)\}\subset  \Omega$ satisfies the  $(B,\xi_0)$-condition, and hence we have
$$
 \ell_{j1}\approx \ell_{j2}\approx \dfrac{|\eta_{j1}|^4}{\epsilon_j}=\dfrac{1/j}{1/j^2}=j\to +\infty
$$ 
 as $j\to \infty$. Therefore, we conclude that 
 \begin{align*}
d^2_{\mathcal{E}_{1,2,3}}(\eta_j;\xi)&\approx \frac{|\xi_3|^2}{\epsilon_j^2}+j \Big( \frac{|\xi_1|^2}{\tau_{j1}^2}+\frac{|\xi_2|^2}{\tau_{j2}^2}\Big)\approx \frac{|\xi_3|^2}{d_{\mathcal{E}_{1,2,3}}(\eta_j)^2}+\frac{|\xi_1|^2}{d_{\mathcal{E}_{1,2,3}}(\eta_j)^{5/4}}+\frac{|\xi_2|^2}{d_{\mathcal{E}_{1,2,3}}(\eta_j)^{7/6}};\\
K_{\mathcal{E}_{1,2,3}}(\eta_j, \eta_j) &\sim \frac{1}{4\pi^3 (\tau_{j1}  \tau_{j2})^2 \epsilon_j^2}\approx  \dfrac{1}{\left(d_{\mathcal{E}_{1,2,3}}(\eta_j)\right)^{2+3/4+2/3}};\\
\lim_{j\to\infty} \mathrm{Sec}_{\mathcal{E}_{1,2,3}}(\eta_j;\xi) &= -1;\lim_{j\to\infty} \mathrm{Ric}_{\mathcal{E}_{1,2,3}}(\eta_j;\xi) =-1; \lim_{j\to\infty} \mathrm{Scal}_{\mathcal{E}_{1,2,3}}(\eta_j;\xi)=-3.
\end{align*}

  \hfill $\Box$
\end{example}

\section{The boundary behavior of the Bergman kernel, the Bergman metric, and curvatures near a weakly pseudoconvex boundary point in $\mathbb C^2$} \label{S4}
\subsection{The spherically tangential convergence}\label{sub-sphere}

Let $\Omega$ be a domain in $\mathbb{C}^2$ with $\xi_0\in \partial \Omega$. We assume that $\partial\Omega$ is $\mathcal{C}^\infty$-smooth and pseudoconvex of finite D'Angelo type near $\xi_0$. By choosing appropriate coordinates $(z,w)$, we may assume that $\xi_0=(0,0)$ and the local defining function $\rho(z,w)$ for $\Omega$ near $\xi_0$ has the expansion
\begin{equation}\label{def-0}
\rho(z, w) =\mathrm{Re}(w) + H(z) +v\varphi(v, z)+ O(|z|^{2m+1}), 
\end{equation}
where $H$ is a real homogeneous subharmonic polynomial of degree $2m$ without harmonic terms, $2m$ is the D'Angelo type of $\partial \Omega$ at $\xi_0$, and $\varphi$ is a $\mathcal{C}^\infty$ function near the origin in $\mathbb{R}\times \mathbb{C}$ with $\varphi(0,0)=0$. The pseudoconvexity of $\partial \Omega$ ensures that $H$ is subharmonic and the type $2m$ is even.

Instead of strong $h$-extendibility, we need the following definition.
\begin{define}[See Definition $4.1$ in \cite{NNN25}]\label{spherically-convergence}
We say that a sequence $\{\eta_j=(\alpha_j,\beta_j)\}\subset  \Omega$ \emph{converges spherically $\frac{1}{2m}$-tangentially to $\xi_0$} if
\begin{itemize}
\item[(a)] $|\mathrm{Im}(\beta_j)|\lesssim |d_\Omega(\eta_j)|$;
\item[(b)] $|d_\Omega(\eta_j)|=o(|\alpha_{j}|^{2m})$;
\item[(c)]  $\Delta H(\alpha_{j})\gtrsim |\alpha_{j}|^{2m-2}$.
\end{itemize}
\end{define}

\begin{remark} For a smooth pseudoconvex domain $\Omega$ in $\mathbb{C}^2$, the condition $\mathrm{(c)}$ simply means that $\Omega$ is strongly pseudoconvex at the boundary points $\eta_j':=(\alpha_j,\beta_j+\epsilon_j)$ for all $j\in\mathbb{N}_{\geq 1}$, where $\{\epsilon_j\}\subset \mathbb{R}^+$ ensures that $\eta_j'\in \partial \Omega$.\end{remark}

\subsection{Estimates of Bergman kernel function and associated invariants near a weakly pseudoconvex boundary point in $\mathbb C^2$}

This subsection is devoted to the proofs of Theorem \ref{holo-sec-curvature-two} and Corollary \ref{cor:higher-dim-bergman-2}. Additionally, two typical examples are presented.

\begin{proof}[Proof of Theorem \ref{holo-sec-curvature-two}]
Let $\Omega$ and $\xi_0\in \partial \Omega $ be as in the statement of Theorem \ref{holo-sec-curvature-two}. As in Subsection \ref{sub-sphere}, we can choose coordinates $(z,w)$ such that $\xi_0=(0,0)$ and the defining function $\rho(z,w)$ has the expansion
\begin{equation}\label{def-0}
\rho(z, w) =\mathrm{Re}(w) + H(z) +v\varphi(v, z)+ O(|z|^{2m+1}), 
\end{equation}
where $H$ is a real homogeneous subharmonic polynomial of degree $2m$ without harmonic terms and $\varphi$ is a $\mathcal{C}^\infty$ function near the origin in $\mathbb{R}\times \mathbb{C}$ with $\varphi(0,0)=0$.

By the hypothesis of Theorem \ref{holo-sec-curvature-two}, let $\{\eta_j\}\subset\Omega$ be a sequence converging spherically $\frac{1}{2m}$-tangentially to $\xi_0$. We write $\eta_j=(\alpha_j,\beta_j)=(\alpha_{j},a_j+ib_j)$ for all $j\in \mathbb{N}_{\geq 1}$. Without loss of generality, we may assume that $\{\eta_j\}\subset U_0\cap \Omega:=U_0\cap\{\rho<0\}$. For each $j$, we consider the associated boundary point $\eta_j'=(\alpha_{j}, a_j +\epsilon_j+i b_j)\in\partial \Omega$, where $\{\epsilon_j\}\subset \mathbb{R}^+$ is appropriately chosen. We then have
\begin{itemize}
\item[(a)] $|b_j|\lesssim \epsilon_j$;
\item[(b)] $\epsilon_j=o(|\alpha_{j}|^{2m})$;
\item[(c)] $\Delta H(\alpha_{j})\gtrsim |\alpha_{j}|^{2m-2}$.
\end{itemize}

According to \cite[Section $3$]{Be03} and \cite[Proposition $1.1$]{Cat89}, for each point $\eta_j'$, there exists a biholomorphism $\Phi_{\eta_j'}$ of $\mathbb{C}^2$ with inverse $(z,w)=\Phi^{-1}_{\eta_j'}(\tilde{z}, \tilde{w})$ given by
$$
\Phi_{\eta_j'}^{-1}(z, w) =\left(\alpha_j + z, a_j+\epsilon_j+i b_j + d_0(\eta_j')w + \sum_{1 \leq k \leq 2m} d_k(\eta_j')z^k\right),
$$
where $d_0,\ldots,d_{2m}$ are $\mathcal{C}^\infty$ functions defined in a neighborhood of the origin in $\mathbb{C}^{2}$ with $d_0(0,0)=1$ and $d_1(0,0)=\cdots=d_{2m}(0,0)=0$, such that
\begin{equation}\label{Eq50}
\rho \circ \Phi_{\eta_j'}^{-1}(z, w) = \mathrm{Re}(w) + \sum_{\substack{j+k \leq 2m \\ j,k > 0}} a_{j,k}(\eta_j')z^j\bar{z}^k + O(|z|^{2m+1} + |z||w|). 
\end{equation}

We first define
\begin{equation*}
A_l(\eta_j')=\max \left\{|a_{j,k}(\eta_j')| : j+k=l\right\} \quad (2\leq l \leq 2m).
\end{equation*}
Then we define $\tau(\eta_j',\epsilon_j)$ by 
\begin{equation*}
\tau_j=\tau(\eta_j',\epsilon_j)=\min \left\{\left( \frac{\epsilon_j}{A_l(\eta_j')} \right)^{1/l} : 2\leq l \leq 2m\right \}.
\end{equation*}
Since the type of $\partial \Omega$ at $\xi_0$ equals $2m$, we have $A_{2m}(\xi_0)\neq 0$. Thus, if $U_0$ is sufficiently small, then $|A_{2m}(\eta_j')|\geq c>0$ for all $\eta_j'\in U_0$. This yields the estimate
\begin{equation*}
\epsilon_j^{1/2m}\lesssim \tau(\eta_j',\epsilon_j) \lesssim \epsilon_j^{1/2} \quad (\eta_j'\in U_0).
\end{equation*}

To complete the scaling procedure, we define the anisotropic dilation $\Delta_j$ by 
\[
\Delta_j (z,w)=\left( \frac{z}{\tau_{j}},\frac{w}{\epsilon_j}\right), \quad j\in \mathbb{N}_{\geq 1}.
\]
As in the proof of Theorem \ref{holo-sec-curvature}, we have $\Delta_j\circ \Phi_{\eta_j'}(\eta_j')=(0,0)$ and $\Delta_j\circ \Phi_{\eta_j'}(\eta_j)=(0,-1/d_0(\eta_j'))\to(0,-1)$ as $j\to\infty$, since $d_0(\eta_j')\to 1$ as $j\to\infty$. In addition, let us define $\rho_j(z,w):=\epsilon_j^{-1}\rho\circ \Phi_{\eta_j'}^{-1}\circ(\Delta_j)^{-1}(z,w)$ for $j\in \mathbb{N}_{\geq 1}$. Then \eqref{Eq50} yields that
\begin{equation*}
\rho_j(z,w)=\mathrm{Re}(w)+ P_{\eta_j'}(z)+O(\tau(\eta_j',\epsilon_j)),
\end{equation*}
where
\begin{equation*}
P_{\eta_j'}(z):=\sum_{\substack{k+l\leq 2m\\
k,l>0}} a_{k,l}(\eta_j') \epsilon_j^{-1} \tau_j^{k+l}z^k \bar{z}^l.
\end{equation*}

Next, we write $H(z)=\sum_{j=1}^{2m-1} a_j z^j\bar{z}^{2m-j}$ and set $z=|z| e^{i\theta}$. This gives $H(z)=|z|^{2m} g(\theta)$ for some function $g(\theta)$. Following the approach in \cite{BF78a}, the Laplacian of $H$ satisfies
$$
\Delta H(z)=|z|^{2m-2} \left((2m)^{2} g(\theta)+g_{\theta\theta}(\theta)\right)\geq 0.
$$
By \cite[Lemma $4.1$]{NNN25}, we also have$$
\frac{\partial^2 H(\alpha_j)}{\partial z\partial \bar z}\epsilon_j^{-1}\tau_j^{2}=(2m)^2g(\theta_j)+g_{\theta\theta}(\theta_j),\; \forall j\geq 1,
$$
where $\alpha_{j}=|\alpha_{j}|e^{\theta_j}$, $j\geq 1$. Because of the condition (c), without loss of generality we may assume that the limit $\displaystyle a:= \lim_{j\to \infty} \dfrac{1}{2}\frac{\partial^2 H}{\partial z\partial \bar z}(\alpha_{j})\epsilon_j^{-1} \tau_{j}^2$ exists. 

Direct computation yields that
\begin{equation}\label{eq-1-1}
a_{l,k-l}(\eta_j')=\frac{1}{k!}\frac{\partial^{k} \rho}{\partial z^l\partial \bar{z}^{k-l}}(\eta_j')=\frac{1}{k!}\frac{\partial^{k} H}{\partial z^l\partial \bar{z}^{k-l}}(\alpha_j)+ \frac{b_j}{k!}\frac{\partial^{k} \varphi}{\partial z^l\partial \bar{z}^{k-l}}(b_j,\alpha_j)+\cdots
\end{equation}
for $j\in \mathbb{N}_{\geq 1}$, $2\leq k\leq 2m$, and $0\leq l\leq k$, where the dots represent higher-order terms. 

Since $H$ is homogeneous of degree $2m$ and subharmonic, we have $\left|\frac{\partial^{k} H}{\partial z^l\partial \bar{z}^{k-l}}(\alpha_j)\right|\lesssim |\alpha_{j}|^{2m-k}$ for $2\leq k\leq 2m$. Using the estimate $|b_j|\lesssim \epsilon_j=o( |\alpha_{j}|^{2m})$, we obtain $|a_{l,k-l}(\eta_j')|\lesssim |\alpha_{j}|^{2m-k}$ for $2\leq k\leq 2m$. This gives $A_k(\eta_j')\lesssim |\alpha_{j}|^{2m-k}$, which leads to
$$
\left(\frac{\epsilon_j}{A_k(\eta_j')} \right)^{1/k}\gtrsim\left(\frac{\epsilon_j}{|\alpha_{j}|^{2m-k}}\right)^{1/k}=|\alpha_{j}| \left(\frac{\epsilon_j}{|\alpha_{j}|^{2m}}\right)^{1/k}, \quad 2\leq k\leq 2m.
$$
Moreover, since $\epsilon_j=o( |\alpha_{j}|^{2m})$ and $|\alpha_{j}| \big(\epsilon_j/ |\alpha_{j}|^{2m}\big)^{1/2}=o\left(|\alpha_{j}| \big(\epsilon_j/ |\alpha_{j}|^{2m}\big)^{1/k}\right)$ for all $k\geq3$, it follows that
$$
\tau_j=\left(\frac{\epsilon_j}{A_2(\eta_j')} \right)^{1/2}\approx|\alpha_{j}| \left(\frac{\epsilon_j}{|\alpha_{j}|^{2m}}\right)^{1/2}.
$$

We proceed to establish convergence for the sequence $\{\Delta_j\circ \Phi_{\eta_j'}(U_0\cap \Omega)\}_{j=1}^{\infty}$. A direct calculation shows that
\begin{align*}
| a_{l,k-l}(\eta_j')| \epsilon_j^{-1}\tau_j^{k}\approx\left |\frac{\partial^k H}{\partial z^l\partial \bar z^{k-l}}(\alpha_j)\right| \epsilon_j^{-1}\tau_j^{k}&\lesssim |\alpha_j|^{2m-k} \epsilon_j^{-1}\tau_j^{k}=|\alpha_j|^{2m} \epsilon_j^{-1}\Big(\frac{\tau_j}{|\alpha_j|}\Big)^{k}\\
&\lesssim \frac{|\alpha_j|^{2m}}{ \epsilon_j}\Big(\frac{\epsilon_j}{|\alpha_j|^{2m}}\Big)^{k/2}=\Big(\frac{\epsilon_j}{|\alpha_j|^{2m}}\Big)^{k/2-1}.
\end{align*}
This implies that $a_{l,k-l}(\eta_j') \epsilon_j^{-1}\tau_j^{k}\to 0$ as $j\to\infty$ for $3\leq k\leq 2m$ and 
$$
\lim_{j\to \infty}a_{1,1}(\eta_j')\epsilon_j^{-1} \tau_{j}^2 =\lim_{j\to \infty} \dfrac{1}{2}\frac{\partial^2 H}{\partial z\partial \bar z}(\alpha_{j})\epsilon_j^{-1} \tau_{j}^2=a >0.
$$
Altogether, after extracting a subsequence if necessary, the sequence $\{ \rho_j\}$ converges on compacta to the following function
$$
\hat\rho(z,w):=\mathrm{Re}(w)+a|z|^2,
$$
where $\displaystyle a=\frac{1}{2} \lim_{j\to \infty} \frac{\partial^2 H}{\partial z\partial \bar z}(\alpha_{j})\epsilon_j^{-1} \tau_{j}^2 > 0$. Therefore, by passing to a subsequence if necessary, we may assume that the sequences $\Omega_j:=\Delta_j\circ \Phi_{\eta_j'}(\Omega)$ and $\Delta_j\circ \Phi_{\eta_j'}(\Omega\cap U_0)$ converge normally to the Siegel half-space
$$
M_{a}:=\left \{( z,w)\in \mathbb{C}^2\colon \hat\rho(z,w)=\mathrm{Re}(w)+a|z|^2<0\right\}.
$$

Now we first define the linear transformation $\Theta$ by 
\[ 
\tilde{w}= w, \quad \tilde{z}=\sqrt{a}\, z,
\]
which maps $M_{a}$ onto the Siegel half-space
$$
\mathcal{U}_2:=\{(z,w)\in \mathbb{C}^2 \colon\mathrm{Re}(w) +|z|^2<0\}.
$$
Subsequently, the holomorphic map $\Psi$ defined by
\[
(z,w)\mapsto \left( \frac{2z}{1-w}, \frac{w+1}{1-w}\right)
\]
is a biholomorphism from $\mathcal{U}_2$ onto $\mathbb{B}^2$.

Next, let us consider the sequence of biholomorphic maps $f_j:=\Psi\circ \Theta\circ \Delta_j\circ \Phi_{\eta_j'} \colon \Omega \to f_j(\Omega)=\Psi\circ \Theta(\Omega_j)$. One notes that $f_j(\Omega \cap U_0)$ converges normally to $\mathbb{B}^{2}$ and $f_j(\partial \Omega \cap U_0)$ converges to $\partial \mathbb{B}^{2}$. Moreover, since $\Theta(0,-1) =(0,-1)$ and $\Psi(0,-1)=(0,0)$, it follows that
\[
f_j(\eta_j) = \Psi\circ \Theta(0,-1/d_0(\eta_j')) = \Psi(0,-1/d_0(\eta_j')) = \left(0, \frac{1-1/d_0(\eta_j')}{1+1/d_0(\eta_j')}\right)  \to (0, 0) \quad \text{as } j \to \infty.
\]
Therefore, by a similar argument as in the proof of Theorem \ref{holo-sec-curvature}, we conclude that for a sufficiently small $\epsilon > 0$, there exists $j_0 \in \mathbb{N}_{\geq 1}$ such that
\[
B\big((0,0), 1 - \epsilon\big) \subset F_j(U_0\cap \Omega) \subset B\big((0,0), 1 + \epsilon\big), \, \forall j \geq j_0,
\]
where $F_j(\cdot) := f_j(\cdot) - f_j(\eta_j)$ for all $j \geq j_0$.

In the sequel, we estimate the Bergman kernel function, Bergman metric, and associated curvatures of $\Omega$ at $\eta_j$ in the direction $\displaystyle\xi= \xi_1\dfrac{\partial}{\partial z}+\xi_{2}\dfrac{\partial}{\partial w}\in T^{1,0}_{\eta_j} \Omega\setminus \{0\}$. To do this, we compute the Jacobian matrices of the component mappings. Indeed, a computation shows that
\begin{align*}
d\Phi_{\eta_j'}\big|_{\eta_j} &= \begin{pmatrix}
1 & 0 \\
-\frac{d_1(\eta_j')}{d_0(\eta_j')} & \frac{1}{d_0(\eta_j')}
\end{pmatrix}, \quad \det (d\Phi_{\eta_j'}\big|_{\eta_j}) = \frac{1}{d_0(\eta_j')} \sim 1;\\
d\Psi\big|_{(0,-1/d_0(\eta_j'))} &= \begin{pmatrix}
\frac{2}{1+1/d_0(\eta_j')} & 0 \\
0 & \frac{2}{(1+1/d_0(\eta_j'))^2}
\end{pmatrix}\sim \begin{pmatrix}
1 & 0 \\
0 & \frac{1}{2}
\end{pmatrix}, \det (d\Psi\big|_{(0,-1/d_0(\eta_j'))}) \sim\frac{1}{2}.
\end{align*}
In addition, since the maps $\Theta$ and $\Delta_j$ are linear, we conclude that
\begin{align*}
dF_j(\xi) &= d\Psi\big|_{(0,-1/d_0(\eta_j'))} \circ d\Theta \circ d\Delta_j \circ d\Phi_{\eta_j'}\big|_{\eta_j}(\xi)\\
&= \begin{pmatrix}
\frac{2}{1+1/d_0(\eta_j')} & 0 \\
0 & \frac{2}{(1+1/d_0(\eta_j'))^2}
\end{pmatrix}\begin{pmatrix}
\sqrt{a} & 0 \\
0 & 1
\end{pmatrix} \begin{pmatrix}
\frac{1}{\tau_j} & 0 \\
0 & \frac{1}{\epsilon_j}
\end{pmatrix} \begin{pmatrix}
1 & 0 \\
-\frac{d_1(\eta_j')}{d_0(\eta_j')} & \frac{1}{d_0(\eta_j')}
\end{pmatrix} \begin{pmatrix}
\xi_1 \\
\xi_2
\end{pmatrix}\\
&\sim \begin{pmatrix}
1 & 0 \\
0 & \frac{1}{2}
\end{pmatrix}\begin{pmatrix}
\sqrt{a} & 0 \\
0 & 1
\end{pmatrix} \begin{pmatrix}
\frac{1}{\tau_j} & 0 \\
0 & \frac{1}{\epsilon_j}
\end{pmatrix}
 \begin{pmatrix}
\xi_1 \\
\frac{\xi_2}{d_0(\eta_j')}-\frac{d_1(\eta_j')\xi_1}{d_0(\eta_j')}
\end{pmatrix}\\
&= \begin{pmatrix}
\frac{\sqrt{a}}{\tau_j} & 0 \\
0 & \frac{1}{2\epsilon_j }
\end{pmatrix} 
 \begin{pmatrix}
\xi_1 \\
\frac{\xi_2}{d_0(\eta_j')}-\frac{d_1(\eta_j')\xi_1}{d_0(\eta_j')}
\end{pmatrix}\\
&= \left(\frac{\sqrt{a}\xi_1}{\tau_j}, \frac{\xi_2-d_1(\eta_j')\xi_1}{2\epsilon_jd_0(\eta_j')}\right)
\end{align*}
for $\xi = (\xi_1, \xi_2) \in T_{\eta_j}^{1,0}\Omega$. 

We shall estimate the coefficients $d_0(\eta_j'), d_1(\eta_j')$. Indeed, following the proof of Theorem~\ref{holo-sec-curvature} we conclude that 
\begin{align*}
\dfrac{1}{d_0(\eta_j')}=2\frac{\partial \rho}{\partial w}(\eta_j')\sim 1\text{ and }-\dfrac{d_1(\eta_j')}{d_0(\eta_j')}=2\frac{\partial \rho}{\partial z}(\eta_j').
\end{align*}
Let us denote by 
$$
\ell_j:=\epsilon_j^{-1}\left| \frac{\partial \rho}{\partial z}(\eta_j') \right|\tau_j,\; j\geq 1.
$$
Since $B\big((0,0), 1 - \epsilon\big) \subset F_j(U_0\cap \Omega) \subset B\big((0,0), 1 + \epsilon\big)$ for all $j$ large enough and $F_j(\eta_j) = (0, 0)$, by Lemma \ref{local-approx} and Corollary ~\ref{cor-approx-holo-cur} it follows that
\begin{equation}\label{Bergmametric-eq2}
\begin{split}
d^2_\Omega(\eta_j;\xi)\sim d^2_{U_0\cap \Omega}(\eta_j;\xi)&\sim ds_{\mathbb{B}^{2}}^2(0; dF_j(\xi), dF_j(\xi)) = 4|dF_j(\xi)|^2\\
&\sim 4\left[\frac{a|\xi_1|^2}{\tau_j^2} + \frac{|\xi_2-d_1(\eta_j')\xi_1|^2}{4\epsilon_j^2}\right]\\
&\approx   \frac{|\xi_2|^2}{\epsilon_j^2}+\max\{\ell_j,1\}\frac{|\xi_1|^2}{\tau_j^2}.
\end{split}
\end{equation}
Next,  the transformation rule for the Bergman kernel function implies that
$$
K_{U_0\cap \Omega}(\eta_j, \eta_j) = K_{F_j(U_0\cap \Omega)}(F_j(\eta_j), F_j(\eta_j)) |J_{F_j}(\eta_j)|^2.
$$
The holomorphic Jacobian determinant is given by
\begin{align*}
\det J_{\mathbb{C}}(F_j) &= \det(d\Psi\big|_{(0,-1/d_0(\eta_j'))}) \cdot \det(d\Theta) \cdot \det(d\Delta_j) \cdot \det(d\Phi_{\eta_j'}\big|_{\eta_j})\\
&= \frac{4}{(1+1/d_0(\eta_j'))^3} \cdot \sqrt{a} \cdot \frac{1}{\tau_j \epsilon_j} \cdot \frac{1}{d_0(\eta_j')}\\
&\sim \frac{\sqrt{a}}{2 \tau_j \epsilon_j}.
\end{align*}
As $F_j(\eta_j) = 0 = (0, 0)$ and $B\big((0,0), 1 - \epsilon\big) \subset F_j(U_0\cap \Omega) \subset B\big((0,0), 1 + \epsilon\big)$ for all $j$ large enough, by Lemma \ref{local-approx} and Corollary ~\ref{cor-approx-holo-cur} one obtains
\begin{align*}
K_{\Omega}(\eta_j, \eta_j)\sim K_{U_0\cap \Omega}(\eta_j, \eta_j) &\sim K_{\mathbb{B}^{2}}(0, 0) |\det J_{\mathbb{C}}(F_j)|^2 = \frac{1}{\pi^{2}} |\det J_{\mathbb{C}}(F_j)|^2\\
&\sim \frac{a}{4\pi^{2} \tau_j^2 \epsilon_j^2}\approx \frac{1}{ \tau_j^2 \epsilon_j^2}.
\end{align*}

Finally, by Corollaries ~\ref{cor-approx-holo-cur} and \ref{cor-approx-holo-cur-2}, we conclude that 

\begin{align*}
\lim_{j\to\infty} \mathrm{Sec}_\Omega(\eta_j;\xi) &= \lim_{j\to\infty} \mathrm{Sec}_{F_j(\Omega)}(F_j(\eta_j);dF_j(\eta_j)(\xi)) \\
&= \lim_{j\to\infty} \mathrm{Sec}_{F_j(\Omega)}\left((0,0);\frac{dF_j(\eta_j)(\xi)}{|dF_j(\eta_j)(\xi)|}\right) = -\frac{4}{3},
\end{align*}
for any $\xi = (\xi_1, \xi_2) \in T_{\eta_j}^{1,0}\Omega\setminus \{0\}$. Similarly, we also obtain
$$
\lim_{j\to\infty} \mathrm{Ric}_{\Omega}(\eta_j;\xi) =-1, \quad \lim_{j\to\infty} \mathrm{Scal}_{\Omega}(\eta_j;\xi)=-2.
$$
This completes the proof of Theorem~\ref{holo-sec-curvature-two}.
\end{proof}

\begin{proof}[Proof of Corollary \ref{cor:higher-dim-bergman-2}] By our assumption, we have
$$
\left|\alpha_{j}\frac{\partial H(\alpha_j)}{\partial z}\right|\approx  |\alpha_{j}|^{2m}.
$$
Since $|b_j|\lesssim \epsilon_j=o( |\alpha_{j}|^{2m})$, arguing similarly to \eqref{eq-1-1}, we obtain 
$$\left|\alpha_j\frac{\partial \rho}{\partial z}(\eta_j)\right|\sim \left|\alpha_j \frac{\partial H}{\partial z}(\alpha_j)\right| \approx|\alpha_{j}|^{2m}.$$
Therefore, one has
$$
\ell_j:=\left(\epsilon_j^{-1}\tau_j\left| \frac{\partial \rho}{\partial z}(\eta_j) \right|\right)^2\approx \left(\epsilon_j^{-1}\left| \alpha_{j}\frac{\partial \rho}{\partial z}(\eta_j) \right| \left(\frac{\epsilon_j}{|\alpha_{j}|^{2m}}\right)^{1/2}\right)^2  \approx \frac{|\alpha_{j}|^{2m}}{\epsilon_j}  \to +\infty
$$
as $j \to \infty$. Consequently, \eqref{Bergmametric-eq2} becomes
$$
d^2_\Omega(\eta_j;\xi)\approx \frac{|\xi_2|^2}{\epsilon_j^2}+\ell_j   \frac{|\xi_1|^2}{\tau_{j}^2},
$$
as desired.\end{proof}

We close this subsection with two examples. First of all, the following example illustrates spherically $\frac{1}{2m}$-tangential convergence.
\begin{example}\label{Kohn-Nirenberg}
Let $\Omega_{KN}$ be the Kohn-Nirenberg domain in $\mathbb{C}^2$, that does not admit a holomorphic support function (see \cite{KN73}) and is recently demonstrated uniformly squeezing in \cite{FRW25}, defined by
$$
\Omega_{KN}:=\left\{(z,w)\in \mathbb{C}^2\colon \mathrm{Re}(w)+ |z|^8+\frac{15}{7}|z|^2\mathrm{Re}(z^6)<0\right\}.
$$
Let us consider a bounded domain $D$ with $(0,0)\in \partial \Omega$ such that $D\cap U_0=\Omega_{KN}\cap U_0$ for some neighbourhood $U_0$ of $(0,0)$ in $\mathbb{C}^2$. We denote by $\rho(z,w)=\mathrm{Re}(w)+ |z|^8+\frac{15}{7}|z|^2\mathrm{Re}(z^6)$ and $P(z)=|z|^8+\frac{15}{7}|z|^2\mathrm{Re}(z^6)$. It is easy to see that $\Delta P(z)=4(16|z|^6+15\mathrm{Re}(z^6))\geq 4|z|^6$, and hence $\partial \Omega$ is strongly $h$-extendible at $(0,0)$.

We first consider a sequence $\eta_j=\Big(\frac{1}{j^{1/8}},-\frac{22}{7j}-\frac{1}{j^2}\Big)\in D$ for every $j\in \mathbb{N}_{\geq 1}$. Then the sequence $\left\{\left(\frac{1}{j^{1/8}},-\frac{22}{7j}-\frac{1}{j^2}\right)\right\}$ converges spherically $\frac{1}{8}$-tangentially to $(0,0)$. Moreover, we have $\rho(\eta_j)=-\frac{22}{7j}-\frac{1}{j^2}+\frac{22}{7j}=-\frac{1}{j^2}\approx -d_{\Omega_{KN}}(\eta_j)$. Setting $\epsilon_j=|\rho(\eta_j)|=\frac{1}{j^2}$ and substituting $\xi = z - \frac{1}{j^{1/8}}$ to  the formulas
\begin{align*}
|\xi + \frac{1}{j^{1/8}}|^8 &= \frac{1}{j} + \frac{8}{j^{7/8}}\mathrm{Re}(\xi) + \frac{16}{j^{3/4}}|\xi|^2 + \frac{12}{j^{3/4}}\mathrm{Re}(\xi^2) + O\Big(\frac{1}{j^{5/8}}\Big);\\
|\xi + \frac{1}{j^{1/8}}|^2\mathrm{Re}\Big((\xi + \frac{1}{j^{1/8}})^6\Big)&= \frac{1}{j} + \frac{8}{j^{7/8}}\mathrm{Re}(\xi) + \frac{7}{j^{3/4}}|\xi|^2 + \frac{21}{j^{3/4}}\mathrm{Re}(\xi^2) + O\Big(\frac{1}{j^{5/8}}\Big),
\end{align*}
we obtain that

\begin{align*}
&\rho(z,w)\\
&=\mathrm{Re}(w)+ \Big|\Big(z-\frac{1}{j^{1/8}}\Big)+\frac{1}{j^{1/8}}\Big|^8+\frac{15}{7}\Big|\Big(z-\frac{1}{j^{1/8}}\Big)+\frac{1}{j^{1/8}}\Big|^2\mathrm{Re}\Big(\Big(\Big(z-\frac{1}{j^{1/8}}\Big)+\frac{1}{j^{1/8}}\Big)^6\Big)\\
&=\mathrm{Re}(w)+\frac{1}{j}+\frac{8}{j^{7/8}} \mathrm{Re}\Big(z-\frac{1}{j^{1/8}}\Big)+\frac{16}{j^{3/4}} \Big|z-\frac{1}{j^{1/8}}\Big|^2+\frac{12}{j^{3/4}} \mathrm{Re}\Big(\Big(z-\frac{1}{j^{1/8}}\Big)^2\Big)\\
&\quad+\frac{15}{7}\left[\frac{1}{j}+\frac{8}{j^{7/8}}\mathrm{Re}\Big(z-\frac{1}{j^{1/8}}\Big)+\frac{21}{j^{3/4}} \mathrm{Re}\Big(\Big(z-\frac{1}{j^{1/8}}\Big)^2\Big)+\frac{7}{j^{3/4}}\Big|z-\frac{1}{j^{1/8}}\Big|^2\right]+\cdots\\
&=\mathrm{Re}(w)+\frac{22}{7j}+\frac{176}{7j^{7/8}} \mathrm{Re}\Big(z-\frac{1}{j^{1/8}}\Big)+\frac{57}{j^{3/4}} \mathrm{Re}\Big(\Big(z-\frac{1}{j^{1/8}}\Big)^2\Big)+\frac{31}{j^{3/4}}\Big|z-\frac{1}{j^{1/8}}\Big|^2\\
&\quad +O\Big(\frac{1}{j^{5/8}}\Big|z-\frac{1}{j^{1/8}}\Big|^3\Big).
\end{align*}

To define an anisotropic dilation, let us denote $\tau_j:=\tau(\eta_j)=\frac{1}{j^{5/8}}$ for all $j\in \mathbb{N}_{\geq 1}$. Now we introduce a sequence of polynomial automorphisms $\phi_{{\eta}_j}^{-1}$ of $\mathbb{C}^2$, given by
\begin{align*}
 z&=\frac{1}{j^{1/8}}+\tau_j \tilde{z};\\
w&=\epsilon_j \tilde{w}-\frac{22}{7j}-\frac{176}{7j^{7/8}} \tau_j \tilde{z}-\frac{57}{j^{3/4}}  \tau_j^2 \tilde{z}^2.
\end{align*}
Therefore, since $\tau_j=\frac{1}{j^{5/8}}=o\big(\frac{1}{j^{1/8}}\big)$, we have
\begin{equation*}
\epsilon_j^{-1}\rho\circ \phi_{{\eta}_j} ^{-1}(\tilde{z},\tilde{w})= \mathrm{Re}(\tilde{w}) + 31|\tilde{z}|^2 +O\Big(\frac{1}{j^{1/2}}\Big).
\end{equation*} 
This implies that $D_j:=\phi_{{\eta}_j}(D)$ converges normally to the model $\mathcal{H}:=\{(\tilde{z},\tilde{w})\in \mathbb{C}^2\colon \mathrm{Re}(\tilde{w}) + 31|\tilde{z}|^2<0 \}$, which is biholomorphically equivalent to $\mathbb{B}^2$, and $\phi_{{\eta}_j}(\eta_j)=(0,-1)\in \mathcal{H}$ for all $j\geq 1$.

A computation shows that the Jacobian matrix of $\phi_{\eta_j}^{-1}$ is given by
\[
 d\phi_{\eta_j}^{-1}(\tilde z,\tilde w)= \begin{pmatrix}
\tau_j & 0 \\
-\frac{176}{7j^{7/8}} \tau_j - \frac{114}{j^{3/4}} \tau_j^2 \tilde{z} & \epsilon_j
\end{pmatrix}.
\]
Therefore, the Jacobian matrix of $\phi_{\eta_j}$ is given by
\[
d\phi_{\eta_j}(z,w) = \begin{pmatrix}
\frac{1}{\tau_j} & 0 \\
\frac{1}{\epsilon_j}\left(\frac{176}{7j^{7/8}} + \frac{114}{j^{3/4}} \tau_j \tilde{z}\right) & \frac{1}{\epsilon_j}
\end{pmatrix}
\]
 Hence, we get
\[
d\phi_{\eta_j}(\eta_j) = \begin{pmatrix}
\frac{1}{\tau_j} & 0 \\
\frac{176}{7j^{7/8}\epsilon_j} & \frac{1}{\epsilon_j}
\end{pmatrix}.
\]
Note that $\frac{\partial \rho}{\partial z}(\eta_j)\ne 0$ and following the proof of Theorem ~\ref{holo-sec-curvature-two}, we obtain
\begin{align*}
d^2_D(\eta_j;\xi)\approx \frac{|\xi_2|^2}{\epsilon_j^2}+\ell_j\frac{|\xi_1|^2}{\tau_{j}^2}\approx \frac{|\xi_2|^2}{\epsilon_j^2}+j\;\frac{|\xi_1|^2}{\tau_{j}^2},
\end{align*}
where 
$$
\ell_j:=\left(\epsilon_j^{-1}\tau_j\left| \frac{\partial \rho}{\partial z}(\eta_j) \right|\right)^2=\left(\epsilon_j^{-1}\tau_j  \frac{176}{7j^{7/8}}  \right)^2\approx j.
$$
In addition, we have
\begin{align*}
K_{D}(\eta_j, \eta_j) \sim  \frac{a}{4\pi^{2} \tau_j^2 \epsilon_j^2}= \frac{31}{4\pi^{2} \tau_j^2 \epsilon_j^2}.
\end{align*}

\end{example}

Finally, the following example demonstrates the case that $\{\eta_j\}$ does not satisfy the $(B,\xi_0)$-condition.

\begin{example}\label{non-balance}
Let $\widetilde \Omega_{KN}$ be the modified Kohn-Nirenberg domain in $\mathbb{C}^2$ given by
$$
\widetilde\Omega_{KN}:=\left\{(z,w)\in \mathbb{C}^2\colon \mathrm{Re}(w)+ |z|^8-|z|^2\mathrm{Re}(z^6)<0\right\}.
$$
Let us consider a bounded domain $\Omega$ with $(0,0)\in \partial \Omega$ such that $\Omega\cap U_0=\widetilde\Omega_{KN}\cap U_0$ for some neighbourhood $U_0$ of $(0,0)$ in $\mathbb{C}^2$. We denote by $\rho(z,w)=\mathrm{Re}(w)+ |z|^8-|z|^2\mathrm{Re}(z^6)$ and $P(z)=|z|^8-|z|^2\mathrm{Re}(z^6)$. It is easy to see that $\Delta P(z)=4(16|z|^6-7\mathrm{Re}(z^6))\geq 36|z|^6$, and hence $\partial \Omega$ is strongly $h$-extendible at $(0,0)$. 

We first consider a sequence $\eta_j=\Big(\frac{1}{j^{1/8}},-\frac{1}{j^2}\Big)\in \Omega$ for every $j\in \mathbb{N}_{\geq 1}$. Then the sequence $\left\{\left(\frac{1}{j^{1/8}},-\frac{1}{j^2}\right)\right\}$ converges spherically $\frac{1}{8}$-tangentially to $(0,0)$. Moreover,  $\rho(\eta_j) = -\frac{1}{j^2} + 0 = -\frac{1}{j^2}$ and hence then sequence $\eta_j'=\Big(\frac{1}{j^{1/8}},0\Big)\in \partial \Omega$ for every $j\in \mathbb{N}_{\geq 1}$. Setting $\epsilon_j=|\rho(\eta_j)|=\frac{1}{j^2}$ and by argument as in Example \ref{Kohn-Nirenberg}, one gets
\begin{align*}
&\rho(z,w)\\
&=\mathrm{Re}(w)+ \Big|\Big(z-\frac{1}{j^{1/8}}\Big)+\frac{1}{j^{1/8}}\Big|^8-\Big|\Big(z-\frac{1}{j^{1/8}}\Big)+\frac{1}{j^{1/8}}\Big|^2\mathrm{Re}\Big(\Big(\Big(z-\frac{1}{j^{1/8}}\Big)+\frac{1}{j^{1/8}}\Big)^6\Big)\\
&= \mathrm{Re}(w) + \frac{1}{j} + \frac{8}{j^{7/8}}\mathrm{Re}\Big(z-\frac{1}{j^{1/8}}\Big) + \frac{16}{j^{3/4}}\Big|z-\frac{1}{j^{1/8}}\Big|^2\\
&\quad + \frac{12}{j^{3/4}}\mathrm{Re}\Big(\Big(z-\frac{1}{j^{1/8}}\Big)^2\Big) - \frac{1}{j} - \frac{8}{j^{7/8}}\mathrm{Re}\Big(z-\frac{1}{j^{1/8}}\Big)\\
&\quad - \frac{7}{j^{3/4}}\Big|z-\frac{1}{j^{1/8}}\Big|^2 - \frac{21}{j^{3/4}}\mathrm{Re}\Big(\Big(z-\frac{1}{j^{1/8}}\Big)^2\Big) + O\Big(\frac{1}{j^{5/8}}\Big|z-\frac{1}{j^{1/8}}\Big|^3\Big)\\
&= \mathrm{Re}(w) + \frac{9}{j^{3/4}}\Big|z-\frac{1}{j^{1/8}}\Big|^2 - \frac{9}{j^{3/4}}\mathrm{Re}\Big(\Big(z-\frac{1}{j^{1/8}}\Big)^2\Big)+ O\Big(\frac{1}{j^{5/8}}\Big|z-\frac{1}{j^{1/8}}\Big|^3\Big).
\end{align*}

To define an anisotropic dilation, let us denote $\tau_j:=\tau(\eta_j)=\frac{1}{j^{5/8}}$ for all $j\in \mathbb{N}_{\geq 1}$. Then we introduce a sequence of polynomial automorphisms $\phi_{{\eta}_j}^{-1}$ of $\mathbb{C}^2$, given by
\begin{align*}
 z&=\frac{1}{j^{1/8}}+\tau_j \tilde{z};\\
w&=\epsilon_j \tilde{w}-\frac{9}{j^{3/4}}  \tau_j^2 \tilde{z}^2.
\end{align*}
Therefore, since $\tau_j=\frac{1}{j^{5/8}}=o\big(\frac{1}{j^{1/8}}\big)$ and $\epsilon_j = \frac{1}{j^2}$, we have
\begin{equation*}
\epsilon_j^{-1}\rho\circ \phi_{{\eta}_j} ^{-1}(\tilde{z},\tilde{w})= \mathrm{Re}(\tilde{w}) + 9|\tilde{z}|^2 +O\Big(\frac{1}{j^{1/8}}\Big).
\end{equation*} 
This implies that $\Omega_j:=\phi_{{\eta}_j}(\Omega)$ converges normally to the model $\mathcal{H}:=\{(\tilde{z},\tilde{w})\in \mathbb{C}^2\colon \mathrm{Re}(\tilde{w}) + 9|\tilde{z}|^2<0 \}$, which is biholomorphically equivalent to $\mathbb{B}^2$, and $\phi_{{\eta}_j}(\eta_j)=(0,-1)\in \mathcal{H}$ for all $j\geq 1$. 

A computation shows that the Jacobian matrix of $\phi_{\eta_j}^{-1}$ is given by
\[
d\phi_{\eta_j}^{-1}(\tilde z,\tilde w)= \begin{pmatrix}
\tau_j & 0 \\
-\frac{18}{j^{3/4}} \tau_j^2 \tilde{z} & \epsilon_j
\end{pmatrix}
\]
and, therefore the Jacobian matrix of $\phi_{\eta_j}$ is given by
\[
d\phi_{\eta_j}(z,w) = \begin{pmatrix}
\frac{1}{\tau_j} & 0 \\
\frac{18}{j^{3/4}} \frac{\tau_j}{\epsilon_j} \tilde{z} & \frac{1}{\epsilon_j}
\end{pmatrix}=
 \begin{pmatrix}
\frac{1}{\tau_j} & 0 \\
\frac{18}{j^{3/4}} \frac{1}{\epsilon_j} (z-\frac{1}{j^{1/8}}) & \frac{1}{\epsilon_j}
\end{pmatrix}.
\]
Hence, we obtain
\[
d\phi_{\eta_j}(\eta_j) = \begin{pmatrix}
\frac{1}{\tau_j} & 0 \\
0 & \frac{1}{\epsilon_j}
\end{pmatrix}.
\]
Note that $\frac{\partial \rho}{\partial z}(\eta_j)=0$ and  following the proof of Theorem ~\ref{holo-sec-curvature-two}, we get
\begin{align*}
d^2_\Omega(\eta_j;\xi)\approx \frac{|\xi_2|^2}{\epsilon_j^2}+\frac{|\xi_1|^2}{\tau_{j}^2}.
\end{align*}
In addition, we have
\begin{align*}
K_{\Omega}(\eta_j, \eta_j) \sim  \frac{a}{4\pi^{2} \tau_j^2 \epsilon_j^2}= \frac{9}{4\pi^{2} \tau_j^2 \epsilon_j^2}.
\end{align*}

\end{example}

\begin{acknowledgement} The author was supported by the Vietnam National Foundation for Science and Technology Development (NAFOSTED) under grant number 101.02-2021.42. 
\end{acknowledgement}

\bibliographystyle{plain}

\end{document}